\newcommand\W{\mathbf W}
\newcommand\Q{\mathbb Q}
\newcommand\G{\Gamma}
\newcommand{\A}[0]{\mathcal{A}}
\newcommand{\AT}[0]{\widetilde{\mathcal{A}}}
\newcommand{\HT}[0]{\widetilde{H}}
\newcommand{\M}[0]{\mathcal{M}}
\newcommand{\MA}[0]{\mathcal M(\A)}
\newcommand{\MT}[0]{\mathcal M(\widetilde{\A})}
\renewcommand{\L}[0]{\mathcal{L}}
\renewcommand{\l}[0]{\ell}
\newcommand{\Z}[0]{\mathbb{Z}}
\newcommand{\R}[0]{\mathbb{R}}
\newcommand{\C}[0]{\mathbb{C}}
\newcommand{\CW}[0]{\mathcal{C}}
\newcommand{\Dp}[0]{\mathcal{D}}
\newcommand{\ph}[0]{\varphi}
\newcommand{\too}[0]{\longrightarrow}
\newcommand{\Zt}[0]{\Z[t^{\pm 1}]}
\newcommand{\kf}[0]{\text{ker}\varphi}
\newcommand{\oA}{{\overline{\A}}}
\newcommand{\bs}{\bigskip}
\newcommand{\bF}[0]{\mathbf{F}}
\newcommand{\bG}[0]{\mathbf{G}}
\newcommand{\bN}[0]{\mathbf{N}}
\newcommand{\bK}[0]{\mathbf{K}}
\newcommand{\Os}[0]{\overline{s}}
\def\qed{\ifmmode $\Box$ \else{\unskip\nobreak\hfil
\penalty50\hskip1em\null\nobreak\hfil $\Box$
\parfillskip=0pt\finalhyphendemerits=0\endgraf}\fi}
\newcommand{\eq}[1][r]
       {\ar@<-3pt>@{->}[#1]
        \ar@<-1pt>@{}[#1]|<{}="gauche"
        \ar@<+0pt>@{}[#1]|-{}="milieu"
        \ar@<+1pt>@{}[#1]|>{}="droite"
        \ar@/^2pt/@{-}"gauche";"milieu"
        \ar@/_2pt/@{-}"milieu";"droite"}
\newcommand{\imm}[1][r] {\ar@{^{(}->}[#1]}
\renewcommand{\ni}[0]{\noindent}
\newcommand{\elle}[0]{ \sl{l}}
\newtheorem{df}{Definition}[section]
\newtheorem{teo}{Theorem}
\newtheorem{prop}[df]{Proposition}
\newtheorem{lem}[df]{Lemma}
\newtheorem{cor}[df]{Corollary}
\newtheorem{rmk}[df]{Remark}
\begin{document}

\hyphenation{mul-ti-pli-ci-ty ho-mo-lo-gy}

\title[Twisted cohomology of arrangements]{Twisted cohomology of arrangements of lines and Milnor fibers}

\author[M.~Salvetti]{M.~Salvetti}
\address{Department of Mathematics,
University of Pisa, Pisa Italy}
\email{salvetti@dm.unipi.it}

\author[M.~Serventi]{M.~Serventi}
\address{Department of Mathematics,
University of Pisa, Pisa Italy}
\email{serventi@mail.dm.unipi.it}

\subjclass[2000]{55N25; 57M05}

\date{\today}

\maketitle

\begin{abstract}Let $\A$ be an arrangement of affine lines in $\C^2,$ with complement $\M(\A).$ The (co)homo-logy of $\M(\A)$ with twisted coefficients is  strictly related to the  cohomology of the  Milnor fibre associated to the conified arrangement, endowed with the geometric monodromy. Although several partial results are known, even the first Betti number of the Milnor fiber is not understood.  We give here a vanishing conjecture for the first homology, which is of  a different nature with respect to the known results. Let $\Gamma$ be the graph of \emph{double points} of $\A:$ 
we conjecture that if $\Gamma$  is connected  then the geometric monodromy acts trivially on the first homology of the Milnor fiber (so the first Betti number is combinatorially determined in this case).  This conjecture depends only on the combinatorics of $\A.$  We prove it in some cases with stronger hypotheses. 

In the final parts, we introduce a new description in terms of the group given by the quotient ot the commutator subgroup of $\pi_1(\M(\A))$ by the commutator of its \emph{length zero subgroup.} We use that to deduce some new interesting cases of a-monodromicity, including a proof of the conjecture under some extra conditions.
\end{abstract}

\section{Introduction} Let $\A:=\{\l_1,\dots,\l_n\}$ be an arrangement of affine  lines in $\C^2,$ with complement $\M(\A).$  Let $\L$ be a rank$-1$ local system on $\M(\A),$ which is defined by a unitary commutative ring $R$ and an assignment of an invertible element $t_{i}\in R^*$  for each line $\l_i\in\A.$ Equivalently, $\L$ is defined by a module structure on $R$ over the fundamental group of $\M(\A)$ (such structure factorizes through the first homology of $\M(\A)$).    By  "coning" $\A$ one obtains a three-dimensional central arrangement, with complement  fibering over $\C^*$. The Milnor fiber $F$ of such fibration is a surface of degree $n+1,$ endowed with a natural monodromy automorphism of order $n+1.$ It is well known that the trivial (co)homology 
of $F$ with coefficients in a commutative ring $A,$ as a module over the monodromy action, is obtained by the (co)homology of $\M(\A)$ with coefficients in $R:=A[t^{\pm1}],$ where here the structure of $R$ as a $\pi_1(\M(\A))$-module is given by taking all the $t_i$'s equal to $t$  and the monodromy action corresponds to $t-$multiplication.  For reflection arrangements, relative to a Coxeter group $\W,$  many computations were done, especially for the {\it orbit space}    \  $\M_{\W}(\A):=\M(\A)/\W,$ which has an associated Milnor fiber $F_{\W}:=F/\W:$ in this case we know  a complete answer for $R=\Q[t^{\pm 1}],$ for all groups of finite type (see \cite{frenkel, dec_proc_sal, dec_proc_sal_stu}), and for some groups of affine type (\cite{calmorsal1, calmorsal2, calmorsal3}) (based on the techniques developed in  \cite{salvetti2, deconcini_salvetti}).    For $R=\Z[t^{\pm 1}]$ a complete answer is known in case $A_n$ (see \cite{callegaro:classical}). Some results are known for (non quotiented) reflection arrangements (see \cite{settepan}, \cite{papamaci}). A big amount of work in this case has been done on related questions, when $R=\C,$ in that case the $t_{i}$'s being non-zero complex numbers, trying to understand the jump-loci (in $(\C^*)^n$) of the cohomology (see for example \cite{suciu, cohenorlik, dimpapsuc, libyuz, falk, cohensuciu}).

 Some algebraic complexes computing the twisted cohomology of $\M(\A)$ are known (see for example the above cited papers).  In \cite{gaiffi_salvetti}, the minimal cell structure of the complement which was constructed in \cite{SS}  (see \cite{dimcapapa, randell}) was used to find an algebraic complex which computes the twisted cohomology, in the case of real defined arrangements (see also \cite{gaimorsal}). The form of the boundary maps depends not only on the lattice of the intersections which is associated to $\A$ but also on its {\it oriented matroid:} for each singular point $P$ of multiplicity $m$ there are $m-1$ generators in dimension $2$ whose boundary has non vanishing components along the lines contained in the "cone" of $P$ and passing above $P.$  
 
Many of the specific examples of arrangements with non-trivial cohomology (i.e., having non-trivial monodromy) which are known are based on the theory of {\it nets} and {\it multinets} (see \cite {falkyuz}): there are relatively few arrangements with non trivial monodromy in cohomology and some conjecture claim very strict restrictions for line arrangements (see \cite{YoBand2}).   
 
In this paper we state a vanishing conjecture of a very different nature, which is very easily stated  and which involves only  the lattice associated to the arrangement. Let $\G$  be the graph with vertex set $\A$ and edge set which is given by taking an edge $(\l_i,\l_j)$ iff $\l_i\cap \l_j$ is a double point. Then our conjecture is as follows:
\bigskip
 
 \begin{equation} \label{conj}{\underline {\rm Conjecture}:} {\mbox{\emph{ \ Assume that $\G$ is connected; then $\A$ has trivial monodromy. }}}
 \end{equation}
 \bigskip
 
\noindent This conjecture is supported by several "experiments", since all computations we made confirm it. Also, all non-trivial monodromy examples which we know have disconnected graph $\G.$ 
 We give here a proof holding with further restrictions. Our method uses the algebraic complex given in \cite{gaiffi_salvetti} so our arrangements are real. 

An arrangement with trivial monodromy will be called \emph{a-monodromic.} We also introduce a notion of monodromic triviality over $\Z.$  By using free differential calculus,  we show that $\A$ is a-monodromic over $\Z$ iff   the fundamental group of the complement $\M(A)$ of the arrangement is commutative modulo the commutator subgroup of the length-zero subgroup of the free group $F_n.$  As a consequence, we deduce that if $G:=\pi_1(\M(\A))$ modulo its second derived group is commutative, then $\A$ has trivial monodromy over $\Z.$ 

In the final part we give an intrinsic characterization of the a-monodromicity. Let $K$ be the kernel of the \emph{length map}  $G\to\Z.$ We introduce the group  $H:=\frac{[G,G]}{[K,K]},$  and we show that such group exactly measures the ''non-triviality'' of the first homology of the Milnor fiber $F,$ as well as its torsion. Any question about the first homology of $F$ is actually a question about $H.$ To our knowledge,  $H$ appears here for the first time (a preliminary partial version is appearing in \cite{SSv}).       
We use this description to give some interesting new results about the a-monodromicity of the arrangement.  First, we show that if $G$ decomposes as a direct product of two groups, each of them containing an element of length $1,$ then $\A$ is a-monodromic (thm.\ref{teo:direct}). This includes the case when $G$ decomposes as a direct product of free groups. As a further interesting  consequence, an arrangement which decomposes into two subarrangements which intersect each other transversally, is a-monodromic.

Also, we use this description to prove our conjecture under the hypotheses that we have a connected \emph{admissible} graph of commutators (thm.\ref{teo:admissible}): essentially, this means to have enough double points $\l_i\cap\l_j$  which give as relation (mod $[K,K]$) the  commutator of the fixed geometric generators $\beta_i, \beta_j$ of $G.$ 

After having finished our paper, we learned about the paper \cite{bailet} were the graph of double points is introduced and some partial results are shown, by very different methods.

\section{Some recalls.}\label{sec:char-res-var}

We recall here some general constructions (see \cite{suciu3}, also as a reference to  most of the recent literature).
Let $M$ be a space with the homotopy type of a finite CW-complex with $H_1(M;\Z)$ free abelian of rank $n,$ having basis $e_1,\dots,e_n.$ Let $\underline{t}=(t_1,\dots,t_n)\in 
(\C^*)^n$ 
and denote by $\C_{\underline{t}}$ the abelian rank one local system over $M$ given by the representation
$$\phi:H_1(M;\Z)\too \C^*=Aut(\C)$$
assigning $t_i$ to $e_i$.
\begin{df}
 With these notations one calls
 $$V(M)=\{\underline{t}\in(\C^*)^n: \dim_\C H_1(M;\C_{\underline{t}})\geq 1 \}$$
 the (first) characteristic variety of $M$
\end{df}
There are several other analogue definitions in all (co)homological dimensions, as well as refined definitions keeping into account the  dimension actually reached by the local homology groups. For our purposes here we need to consider only the above definition. 

The characteristic variety of a CW-complex $M$ turns out to be an algebraic subvariety of the algebraic torus 
$(\C^*)^{b_1(M)}$ which depends only on the fundamental group $\pi_1(M)$ (see for ex. \cite{cohensuciu}).

Let now $\A$ be a complex hyperplane arrangement in $\C^n$. One knows that the complement $\M(A)=\C^n\setminus 
\bigcup_{H\in\A}H$ has the homotopy type of a finite CW-complex of dimension $n.$ Moreover, in this case one knows by a general result  (see \cite{arapura}) that  the characteristic variety of $M$ is a finite union of torsion translated subtori of the 
algebraic torus $(\C^*)^{b_1(M)}.$

Now we need to briefly recall two standard constructions in arrangement theory (see \cite{orlik_terao} for details).

Let $\A=\{H_1,\dots,H_n\}$ be an affine hyperplane arrangement in $\C^n$ with coordinates $z_1,\dots,z_n$ and, for 
every $1\leq i\leq n$ let $\alpha_i$ be a linear polynomial such that $H_i=\alpha_i^{-1}(0)$. The \emph{cone} $c\A$ of 
$\A$ is a central arrangement in $\C^{n+1}$ with coordinates $z_0,\dots,z_n$ given by 
$\{\widetilde{H_0},\widetilde{H_1},\dots,\widetilde{H_n}\}$ where $\widetilde{H_0}$ is the coordinate hyperplane 
$z_0=0$ and, for every $1\leq i\leq n$, $\widetilde{H}_i$ is the zero locus of the homogenization of 
$\alpha_i$ with respect to $z_0$.

Now let $\AT=\{\widetilde{H}_0,\dots,\widetilde{H}_n\}$ be a central arrangement in 
$\C^{n+1}$ and choose coordinates $z_0,\dots,z_n$ such that $\widetilde{H}_0=\{z_0=0\}$; moreover, for every $1\leq 
i\leq n$, let $\widetilde{\alpha_i}(z_0,\dots,z_n)$ be such that $\widetilde{H_i}=\widetilde{\alpha_i}^{-1}(0)$. The 
\emph{deconing} of $\widetilde{\A}$ is the arrangement $d\AT$ in $\C^n$ given by $\{H_1,\dots,H_n\}$ where, if we set 
for every $1\leq i\leq n$, $\alpha_i(z_1,\dots,z_1)=\widetilde{\alpha_i}(1,z_1,\dots,z_n)$, $H_i=\alpha_i^{-1}(0)$.
One see easily that  $\M(c\A)=\M(\A)\times \C^*$  (and conversely 
$\M(\AT)=\M(d\AT)\times \C^*$).

%
The fundamental group $\pi_1(\MT))$  is generated by elementary loops $\beta_i, $ $i=0,\dots,n,$  around the hyperplanes and
in the decomposition  $\pi_1(\M(\A))\simeq \pi_1(\M(d\A))\times \Z$ the generator of 
$\Z=\pi_1(\C^*)$ corresponds to a loop going around all the hyperplanes. The generators can be ordered so that such a loop is represented by  $\beta_0\dots\beta_n.$ Choosing $\HT_0$ as the hyperplane at infinity in the deconing $\A=d\AT,$  one has
(see \cite{cohensuciu})
$$V(\AT)=\{\underline{t}\in(\C^*)^{n+1}: (t_1,\dots,t_{n})\in V(d\A) \ \text{ and } t_0\cdots t_n=1 \}.$$
It is still an open question whether  the characteristic 
variety $V(\AT)$  is combinatorially determined, that is, determined by the intersection lattice $L(\AT)$. Actually, the 
question is partially solved: thanks to the above description  we can write
$$V(\AT)=\check{V}(\AT)\cup T(\AT)$$
where $\check{V}(\AT)$ is the union of all the components of $V(\AT)$ passing through the unit element $\underline{1}=(1,1,\dots,1)$ and 
$T(\AT)$ is the union of the translated tori of $V(\AT)$.

The "homogeneous" part $\check{V}(\AT)$ is combinatorially described through the \emph{resonance variety}
$$\mathcal{R}^1(\AT):=\{a\in A^1 : H^1(A^\bullet,a\wedge \cdot)\neq 0 \}$$ 
introduced in \cite{falk}. Here $A^\bullet$ is the Orlik-Solomon algebra over $\C$ of $\AT$.
Denote by $\mathcal{V}(\AT)$ the tangent cone of $V(\AT)$ 
at $\underline{1}$; it turns out that $\mathcal{V}(\AT)\cong\mathcal{R}^1(\AT)$.  So, 
from $\mathcal{R}^1(\AT)$ we can obtain the components of $V(\AT)$ containing $\underline{1}$ by exponentiation.

It is also known (see \cite{cohensuciu,libyuz}) that $\mathcal{R}^1(\AT)$ is  a subspace arrangement: 
$\mathcal{R}^1(\AT)=C_1\cup\dots\cup C_r$ with $\dim C_i\geq 2$, $C_i\cap C_j=\emptyset$ for every $i\neq j$.

One makes a distinction between 
 \emph{local components} $C_I$ of $R^1(\AT),$ associated to a codimensional-2 flat $I$ in the intersection lattice,  which are contained in some coordinate hyperplanes; and 
 \emph{global components}, which are not contained in any 
coordinate hyperplane of $A^1$. Global components of dimension $k-1$ are known to correspond to $(k,d)$-\emph{multinets} (\cite{falkyuz}). Let $\overline{\A}$ be the projectivization of $\AT.$ A \emph{$(k,d)$-multinet} on a multi-arrangement $(\overline{\A},m),$    is a pair 
$(\mathcal{N},\mathcal{X})$ where $\mathcal{N}$ is a partition of $\overline{A}$ into $k\geq 3$ classes 
$\overline{\A}_1,\dots,\overline{\A}_k$ and 
$\mathcal{X}$ is a set of multiple points with multiplicity greater than or equal to $3$ 
which satisfies a list of conditions. We just recall that 
  $\mathcal{X}$ determines $\mathcal{N}$: construct a graph 
$\Gamma'=\Gamma'(\mathcal{X})$ with $\oA$ as vertex set and an edge from $l$ to $l'$ iff $l\cap l'\notin \mathcal{X}$. 
Then the connected components of $\Gamma'$ are the blocks of the partition $\mathcal{N}.$

\section{The Milnor fibre and a conjecture}
Let \ $Q:\C^3\to \C$ \ be a homogeneous polynomial (of degree $n+1$) which defines the arrangement $\AT.$
Then  $Q$ gives a fibration 
\begin{equation} \label{eq:fibering}
Q_{|\M(\tilde{\A})}:\ \M(\tilde{\A})\to \C^*
\end{equation}
with \emph{Milnor fibre} 
$$\mathbf F = Q^{-1}(1)$$
and \emph{geometric monodromy}   
$$\pi_1(\C^*,1)\to Aut(F)$$
induced by   \ $x\to e^{\frac{2\pi i}{n+1}}\cdot x$ \ (see for example  \cite{suciu2}, \cite{YoBand3}).

Let \  $A$ \ be any unitary commutative ring and 
$$R\ :=\ A[t,t^{-1}].$$  
Consider the abelian representation 
$$\pi_1(\MT)\to H_1(\MT;\Z)\to  Aut(R):\ \beta_j \to q\cdot$$taking a  generator $\beta_j$ into $t$-multiplication. 
Let  $R_t$  be the ring $R$ endowed with  this  \ $\pi_1(\MT)-$module structure. 
Then it is well-known:

\begin{prop} One has an $R$-module isomorphism  
$$H_*(\MT,R_t) \cong H_*(F,A)$$
where $t-$multiplication on the left corresponds to the monodromy action on the right.
\end{prop}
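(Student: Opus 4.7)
The plan is to realize $H_*(F,A)$ as the homology of a suitable infinite cyclic cover of $\MT$, and then to identify that cover-homology with the twisted homology $H_*(\MT,R_t)$. The result is a classical application of Milnor's fibration theorem combined with the dictionary between homology of cyclic covers and twisted homology.

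First I would use the fibration \eqref{eq:fibering}. Pulling it back along the universal cover $\exp\colon\C\to\C^*$ produces a pullback space $\widehat{\M}$ which fits into a commutative diagram where $\widehat{\M}\to\MT$ is a regular infinite cyclic covering and $\widehat{\M}\to\C$ is a locally trivial fibration with fiber $F$. Since $\C$ is contractible, the inclusion of any fiber is a homotopy equivalence, so $\widehat{\M}\simeq F$. The deck transformation group $\Z$ of $\widehat{\M}\to\MT$ acts on this fiber by lifting the canonical loop in $\pi_1(\C^*,1)$; this is precisely the geometric monodromy $x\mapsto e^{2\pi i/(n+1)}\,x$ on $F$ by definition of the monodromy of \eqref{eq:fibering}. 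Thus, as a $\Z[t,t^{-1}]$-module with $t$ acting as the chosen generator of the deck group,
$$H_*(\widehat{\M},A)\ \cong\ H_*(F,A)$$
with $t$-action corresponding to the monodromy action.

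Second, the covering $\widehat{\M}\to\MT$ is the one associated to the kernel of $Q_*\colon\pi_1(\MT)\to\pi_1(\C^*,1)=\Z$. Since $Q$ is homogeneous of degree $n+1$ and each generator $\beta_j$ is a small loop around a hyperplane, $Q_*(\beta_j)=1$ for every $j$; in other words, $Q_*$ is the total length map. It therefore coincides with the abelian representation $\pi_1(\MT)\to H_1(\MT;\Z)\to\Z$ which defines $R_t$ (once we identify the generator of $\Z$ with $t$-multiplication on $R$). By the standard comparison between local coefficient (co)homology and equivariant (co)homology of the associated cover, one then has a canonical $R$-module isomorphism
$$H_*(\MT,R_t)\ \cong\ H_*(\widehat{\M},A),$$
where the left-hand $t$-action matches the deck-transformation action on the right. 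Combining these two isomorphisms yields the claim.

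The only genuine point to check carefully is that the various identifications are compatible in sign and orientation: namely that the preferred generator of $\pi_1(\C^*,1)$, the preferred generator of the deck group of $\widehat{\M}\to\MT$, the geometric monodromy $x\mapsto e^{2\pi i/(n+1)}x$ on $F$, and the $t$-multiplication on $R_t$ are matched consistently. This is routine once one fixes the orientation of the loop $\theta\mapsto e^{2\pi i\theta}$ in $\C^*$ and observes that each meridional generator $\beta_j$ is mapped to this loop by $Q$, so there is no room for a sign ambiguity.
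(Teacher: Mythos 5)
Your argument is correct and is precisely the standard proof of this statement, which the paper itself does not spell out (it introduces the proposition with ``Then it is well-known''). The two steps you give --- identifying $F$ up to homotopy with the infinite cyclic cover of $\MT$ obtained by pulling back the Milnor fibration along $\exp\colon\C\to\C^*$, with deck transformation realizing the geometric monodromy, and then invoking the standard equivalence between homology with coefficients in $R_t$ and the $\Z[t^{\pm1}]$-equivariant homology of the cover associated to the total length map --- are exactly the ingredients the paper implicitly relies on (and uses again later, e.g.\ in the isomorphisms (\ref{isomorf1}) and (\ref{isomorf2}) via Shapiro's lemma). The one point worth stating explicitly, which you do, is that $Q_*(\beta_j)=1$ for every meridian, so the covering defined by $\ker Q_*$ coincides with the one defined by the representation sending each $\beta_j$ to $t$-multiplication.
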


In particular for  $R=\Q[t,t^{-1}],$ which is a PID, one has 
$$H_*(\MT,\Q[t^{\pm 1}]) \cong H_*(F,\Q).$$
Since the monodromy operator has order dividing $n+1$ then 
$H_*(\MT;R_t)$ decomposes into cyclic modules either isomorphic to $R$ or to $\frac{R}{(\ph_d)},$ where
$\ph_d$ is a cyclotomic polynomial, with $d|n+1\ .$ 
It is another open problem to  find a (possibly combinatorial) formula for the Betti numbers of $F.$  

It derives from the spectral sequence associated to (\ref{eq:fibering}) that
$$n+1=dim(H_1(\MT;\Q))= 1 + dim \frac{H_1(F;\Q)}{(\mu-1)}$$
where on the right one has the coinvariants w.r.t. the monodromy action.
Therefore
$$b_1(F)\geq\ n;$$
actually
$$b_1(F)=n\quad   \Leftrightarrow \quad \mu=id.$$

\begin{df}\label{amonodromy} An arrangement $\AT$ with trivial monodromy will be called \emph{a-monodromic.}
\end{df}

\begin{rmk} The arrangement $\AT$ is a-monodromic iff 
$$H_1(F;\Q)\cong \Q^n \ \mbox{(equivalently: $H_1(\MT;R)\cong \left(\frac{R}{(t-1)}\right)^{n}$ )} $$
\end{rmk} 

Let $\A=d\AT$ be the affine part.  In analogy with definition \ref{amonodromy} we say

\begin{df}  The affine arrangement $\A$ is \emph{a-monodromic} if  \ 
$$H_1(\MA;R)\cong \left(\frac{R}{(t-1)}\right)^{n-1}.$$
\end{df}

By Kunneth formula one easily gets  (with $R=\Z[t^{\pm 1}]$ or $R=\Q[t^{\pm 1}]$) 
\begin{equation}\label{kunneth}H_1(\MT;R)\cong H_1(\MA;R)\otimes \frac{R}{(t^{n+1}-1)}\oplus \frac{R}{(t-1)}.\end{equation}

It follows that  if $\A$ has trivial monodromy  then $\AT$ does.  The converse is not true in general (see the example in fig.(\ref{completetriangle})).

We can now state the conjecture presented in the introduction.
\bs

{\bf\emph{Conjecture 1:}}\label{conj1}  let $\Gamma$ be the graph with vertex set $\A$ and edge-set all pairs $(\l_i,\l_j)$\label{conj2} such that $\l_i\cap\l_j$ is a double point. Then if $\Gamma$ is connected then $\A$ is a-monodromic.

{\bf\emph{Conjecture 2:}}  let $\Gamma$ be as before. Then if $\Gamma$ is connected then  $\AT$ is a-monodromic.
\bs

By formula (\ref{kunneth}) conjecture $1$ implies conjecture $2.$ 

A partial evidence of these conjecture is that the connectivity condition on the graph  of double points give strong restrictions on the characteristic variety, as we now show.
\begin{rmk}\label{noglobal} Let $\underline{t}=(t,\dots,t)\in(\C^*)^{n+1}$ give non-trivial monodromy for the arrangement $\AT.$ Then $\underline{t}\in V(\AT).$ Moreover,  $\underline{t}$ can intersect $\check{V}(\AT)$  only in some global component.
\end{rmk} 
Next theorem shows how the connectivity of $\Gamma$ is an obstruction to the existence of multinet structures.
\begin{teo}\label{teo:no-multinet}
If the above graph $\Gamma$ is connected  then the projectivized $\oA$ of $\AT$ does not support any multinet structure.
\end{teo}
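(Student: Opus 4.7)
The strategy is to argue by contradiction. Suppose $\overline{\A}$ admits a multinet structure $(\mathcal{N}, \mathcal{X})$ with $k\geq 3$ classes. I would then compare the graph $\Gamma$ with the auxiliary graph $\Gamma'(\mathcal{X})$ recalled at the end of the previous section: by the description quoted there, the blocks of the partition $\mathcal{N}$ are exactly the connected components of $\Gamma'(\mathcal{X})$, so the contradiction will come from showing that $\Gamma'(\mathcal{X})$ has at most two connected components whenever $\Gamma$ is connected.

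The key step is the inclusion $\Gamma \subseteq \Gamma'(\mathcal{X})|_{\A}$. Since $\mathcal{X}$ only contains points of multiplicity $\geq 3$, every double point of $\overline{\A}$ lies automatically outside $\mathcal{X}$. Now take any edge $(\l_i,\l_j)$ of $\Gamma$: by definition $\l_i\cap \l_j$ is a point of the affine plane through which no other line of $\A$ passes. The extra projective line $\l_0$ is the line at infinity, hence cannot pass through any affine point, so $\l_i\cap \l_j$ remains a point of multiplicity exactly $2$ when viewed in $\overline{\A}$. Therefore $\l_i\cap \l_j\notin \mathcal{X}$, and $(\l_i,\l_j)$ is also an edge of $\Gamma'(\mathcal{X})$.

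Once this is established the conclusion is immediate: if $\Gamma$ is connected then all $n$ affine lines of $\A$ lie in a single connected component of $\Gamma'(\mathcal{X})$, i.e.\ in a single block of $\mathcal{N}$. The only remaining line of $\overline{\A}$ is $\l_0$, which by itself contributes at most one further block, so $\mathcal{N}$ has at most two classes, contradicting $k\geq 3$. There is no genuine obstacle here: the argument is essentially a direct translation of the combinatorial characterization of $\mathcal{N}$ via $\Gamma'(\mathcal{X})$, once one observes the tautological fact that $\l_0$ cannot pass through an affine double point. If anything, the only point worth checking carefully is that the definition of ``double point'' in $\Gamma$ (which is affine, and does not involve $\l_0$) really agrees with the notion of multiplicity-two intersection in $\overline{\A}$, which is exactly what the reasoning above verifies.
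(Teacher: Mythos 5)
Your proof is correct and follows essentially the same route as the paper: both arguments observe that every edge of $\Gamma$ is an edge of $\Gamma'(\mathcal{X})$ (since a double point cannot lie in $\mathcal{X}$), deduce that $\Gamma'(\mathcal{X})$ has at most two connected components (the one containing all affine lines, plus possibly one for the line at infinity), and conclude that no partition into $k\geq 3$ classes is possible. Your added check that an affine double point stays a multiplicity-two point of $\overline{\A}$ is a harmless elaboration of a step the paper leaves implicit.
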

\proof
Choose a set $\mathcal{X}$ of 
points of multiplicity greater than or equal to $3$ and build $\Gamma'(\mathcal{X})$ as 
we said at the end of section \ref{sec:char-res-var}. This graph $\Gamma'(\mathcal{X})$ has $\oA$ as set of vertices and  the set of edges of $\Gamma$ is contained in the set of edges of $\Gamma'(\mathcal{X})$. Since by hypothesis
$\Gamma$ is connected  then $\Gamma'(\mathcal{X})$ has at most two connected components and so $\mathcal{X}$ cannot give a 
multinet structure an $\oA$.
\qed
\begin{cor}
 If the graph $\Gamma$ is connected,  there is no global resonance component in $\mathcal{R}^1(\AT)$.
\end{cor}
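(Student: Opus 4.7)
The plan is to reduce this directly to the preceding Theorem \ref{teo:no-multinet} by invoking the correspondence between global components of the resonance variety and multinet structures, which was recalled at the end of Section \ref{sec:char-res-var}. Specifically, the excerpt states that global components of $\mathcal{R}^1(\AT)$ of dimension $k-1$ are in bijective correspondence with $(k,d)$-multinets on the projectivization $\oA$ (citing \cite{falkyuz}).

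The first step is to argue by contradiction: suppose there exists a global component $C \subset \mathcal{R}^1(\AT)$. By the cited correspondence, the existence of such a $C$ (of some dimension $k-1 \geq 2$) produces a $(k,d)$-multinet structure $(\mathcal{N},\mathcal{X})$ on $\oA$ for some $k \geq 3$.

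The second step is to apply Theorem \ref{teo:no-multinet} directly: since $\Gamma$ is connected by hypothesis, $\oA$ supports no multinet structure whatsoever, contradicting the existence of $(\mathcal{N},\mathcal{X})$. Hence no global component can exist, and every component of $\mathcal{R}^1(\AT)$ must be local, i.e., associated to a codimension-$2$ flat and contained in some coordinate hyperplane.

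There is essentially no obstacle here since the real content sits in Theorem \ref{teo:no-multinet}; the corollary is a one-line translation via the multinet/global-component dictionary. The only subtlety worth mentioning explicitly is that one should invoke the correct direction of the correspondence — namely, that every global component \emph{produces} a multinet (not merely that multinets produce components) — which is precisely the statement recorded from \cite{falkyuz} in Section \ref{sec:char-res-var}.
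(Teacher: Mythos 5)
Your argument is correct and is exactly the one the paper intends: the corollary follows immediately from Theorem \ref{teo:no-multinet} via the correspondence between global resonance components and multinets recorded from \cite{falkyuz}, which is why the paper leaves the proof as a bare \qed. Your remark about invoking the right direction of the dictionary (global components produce multinets) is the only content needed, and you have it.
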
 \qed

So, according to remark \ref{noglobal}, if $\Gamma$ is connected then non trivial monodromy could appear only in the presence of some translated subtori in the characteristic variety.

 

\section{Algebraic complexes}\label{part2} We shall prove the conjectures with extra assumptions on the arrangement.  Our tool will be  an algebraic complex which was  obtained in \cite{gaiffi_salvetti}, as a $2-$dimensional refinement of that in \cite{SS}, where the authors used the explicit construction of a {\it minimal cell complex} which models the complement. Since these complexes work for real defined arrangements, this will be our first restriction.

Of course, there are other algebraic complexes computing local system cohomology (see the references listed in the introduction). The one in \cite{gaiffi_salvetti} seemed to us particularly suitable to attack the present problem (even if we were not able to solve it in general).

First, the complex depends on a fixed and generic system of "polar coordinates". In the present situation, this just means to take an oriented affine real line $\l$  which is transverse to the arrangement. We also assume (even if it is not strictly necessary) that $\l$ is "far away" from $\A,$ meaning that it does not intersect the closure of the bounded facets of the arrangement. This is clearly possible because the union of bounded chambers is a compact set (the arrangement is finite).    The choice of $\l$ induces a labelling on the lines $\{ \l_1,\dots,\l_n\}$ in $\A,$ where the indices of the lines agree with the  ordering  of the  intersection points with $\l,$ induced by the orientation of $\l.$        

Let  us choose a basepoint $O\in\l,$  coming before all the intersection points of $\l$ with $\A$ (with respect to the orientation of $\l$).   We recall the construction in \cite{gaiffi_salvetti}  in the case  of the abelian local system defined before.

Let $Sing(\A)$ be the set of singular points of the arrangement. For any point $P\in Sing(\A)$, let $S(P):=\{\l\in \A: \ P\in \l\};$   so  $m(P)=|S(P)|$ is the multiplicity of $P.$

Let   $i_P,\ i^P$ be the minimum and maximum index of the lines in $S(P)$ (so $i_P< i^P$). We denote by $C(P)$  the subset of lines in $\A$ whose indices belong to the closed interval $[i_P,i^P].$ We also denote by 
$$U(P):=\{\l \in \A:\   \mbox {$\l$ does not separates $P$ from the basepoint  $O$} \}$$ 
 
Let  $(\mathcal C_*,\partial_*)$ be the $2-$dimensional algebraic complex of free $R-$modules having one  $0-$dimensional basis element $e^0,$ \ $n$ $1-$dimensional basis elements $e_j^1,\ j=1,\dots,n,$ ($e^1_j$ corresponding to the line $\l_j$) and {\small $\nu_2=\sum_{P\in Sing(\A)}  m(P)\!\!-\!\!1$} \  $2-$dimensional basis elements: to the singular point $P$ of multiplicity $m(P)$ we associate generators $e^2_{P,h},\ h=1,\dots, m(P)-1$ . The lines through $P$ will be indicized as $\l_{j_{P,1}},\dots, \l_{j_{P,m(P)}}$ (with growing indices). 

As a dual statement to  \cite{gaiffi_salvetti}, thm.2,  we obtain:

\begin{teo}
\label{teo:delta2}
The local system homology  $H_*(\M(\A);R)$  is computed by the complex $(\mathcal C_*,\partial_*)$ above, where 
$$\partial_1(e^1_j)\ =\ (t_j-1)\ e^0\  $$
and
{\small \begin{equation}\begin{aligned}\ensuremath{&\label{delta2gen}\partial_2(e^2_{P,h})\ =\ 
\sum_{ {\tiny
\begin{array}{c}
   \l_j \in S(P)  
\end{array}} } \left(\prod_{{\tiny
\begin{array}{c} i <j \ s.t. \\  \elle_i\in  U(P)\end{array}} }t_i    \right) \left(  \prod_{{\tiny
\begin{array}{c}   i\in [j_{P,h+1}\rightarrow j)\end{array}} } t_i-\prod_{{\tiny
\begin{array}{c} i <j \ s.t. \\  \elle_i\in  S(P)\end{array}} }t_i     \right )\ e^1_{j}\  + \\ }
\ensuremath{&
+\sum_{  {\tiny
\begin{array}{c}
\l_j\in  C(P)\cap U(P)
\end{array}}
} \left(\prod_{{\tiny
\begin{array}{c} i <j\ s.t. \\  \elle_i\in  U(P)\end{array}} } t_i    \right)    \left ( 1- \prod_{{\tiny
\begin{array}{c} i\leq j_{P,h},\ i<j  \\ \elle_i\in S(P) \end{array}} } t_i\right )\left (\prod_{{\tiny
\begin{array}{c} i\geq j_{P,h+1}, \ i <j  \\ \elle_i\in S(P) \end{array}} } t_i- \prod_{{\tiny
\begin{array}{c} i\geq j_{P,h+1}  \\  \elle_i\in S(P)\end{array}} } t_i \right ) \ e^1_{j}  }
\end{aligned}
\end{equation} }

\bigskip

\noindent where  \([j_{P,h+1}\rightarrow j) \) is the set of indices of the lines in $S(P)$ which run from $j_{P,h+1}$ (included) to $j$ (excluded) in the cyclic ordering of $1,\dots,n.$ 
\end{teo}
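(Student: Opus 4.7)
The plan is to deduce the formula for $\partial_2$ directly from its cohomological counterpart, Theorem~2 of \cite{gaiffi_salvetti}, by applying the standard duality between the chain and cochain complexes with rank--one local coefficients. The only geometric input is thus the minimal CW--structure of $\M(\A)$ built in \cite{SS}; the rest is algebraic bookkeeping.

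First, I would recall that \cite{gaiffi_salvetti}, Theorem~2, provides an explicit cochain complex with cellular basis $e^0$, $e^1_j$, $e^2_{P,h}$ (the very same basis used here) whose cohomology is $H^*(\M(\A);R_t)$. This cochain complex is the algebraic incarnation of the minimal cellular model of \cite{SS}, whose attaching maps are written as combinatorial words in the group ring $\Z[\pi_1(\M(\A))]$ in terms of the polar data $(\l,O)$.

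Next, I would invoke the following general fact. For any finite CW--complex $X$ endowed with a rank--one local system $R_t$ coming from a character $\phi\colon \pi_1(X)\to R^*$, if the cellular boundary in the universal cover satisfies $\partial^{\mathrm{cell}}_k(\sigma)=\sum_\tau c_{\sigma,\tau}\,\tau$ with $c_{\sigma,\tau}\in\Z[\pi_1(X)]$, then the homological boundary $\partial_k$ in $C_*(X;R_t)$ is obtained by applying $\phi$ to every $c_{\sigma,\tau}$, while the cohomological coboundary $\delta^{k-1}$ in $C^*(X;R_t)$ applies $\phi$ composed with the anti--involution $g\mapsto g^{-1}$ of $\Z[\pi_1(X)]$. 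In our abelian rank--one situation this anti--involution is just the ring automorphism $t_i\mapsto t_i^{-1}$.

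The proof then reduces to applying this recipe to the formula in \cite{gaiffi_salvetti}: substitute $t_i\mapsto t_i^{-1}$ in every coefficient of $\delta^1$, read the resulting matrix by rows to extract the coefficient of $e^1_j$ in $\partial_2(e^2_{P,h})$, and finally clear the negative exponents by multiplying and dividing by a common monomial (e.g.\ $\prod_{i<j,\,\elle_i\in U(P)} t_i$), using the complementary identity $\prod_{\elle_i\in U(P)}t_i\cdot \prod_{\elle_i\notin U(P)}t_i=\prod_i t_i$. In this way the first sum of the Gaiffi--Salvetti formula (running over $\elle_j\in S(P)$) matches the first sum of the statement, and the second sum (running over $\elle_j\in C(P)\cap U(P)$) matches the second sum.

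The main obstacle is the purely combinatorial verification that the cyclic interval $[j_{P,h+1}\rightarrow j)$ and the subsets $U(P)$, $S(P)$, $C(P)\cap U(P)$ transform correctly under the involution $t_i\mapsto t_i^{-1}$ followed by the normalization described above. This check is done singular point by singular point and, although tedious, is routine: it amounts to rewriting each product as a monomial in the original $t_i$'s, factoring out the prefactor $\prod_{i<j,\,\elle_i\in U(P)}t_i$, and observing that the remaining differences of monomials are exactly the ones displayed in the statement. No further geometric input, nor any additional information about the arrangement, is required.
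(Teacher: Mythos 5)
Your proposal is correct and follows essentially the same route as the paper: the authors state the theorem precisely as ``a dual statement to \cite{gaiffi_salvetti}, thm.~2'' and give no further argument, so the entire content of the proof is exactly the dualization of the Gaiffi--Salvetti cochain formula that you describe, with the transpose/involution/unit-normalization bookkeeping being routine and not affecting the homology. Your explicit remark that rescaling by unit monomials absorbs any ambiguity in the left/right module conventions is a sensible safeguard, but it does not constitute a departure from the paper's approach.
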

By convention, a product over an empty set of indices equals $1.$

When $R=A[t^{\pm 1}]$ and $t_i=t,\ i=1,\dots, n,$ we obtain the local homology  $H_*(\M(\A);R)$ by using an analog algebraic complex, where all $t_i$'s equal $t$ in the formulas. In particular (\ref{delta2gen}) becomes

{\small \begin{equation}\begin{aligned}\ensuremath{&\label{delta2t}\partial_2(e^2_{P,h})\ =\ 
\sum_{ {\tiny
\begin{array}{c}
   \l_j \in S(P)  
\end{array}} }   t^{\#\{\l_i\in U(P):\ i<j\} } \left(  t^{\#[j_{P,h+1}\rightarrow j)}
 - t^{\#\{\l_i\in S(P):\ i<j\}}   \right )\ e^1_{j}\  + \\ }
 \ensuremath{& +\sum_{  {\tiny
\begin{array}{c}
\l_j\in  C(P)\cap U(P)
\end{array}}
}  t^{\#\{\l_i\in U(P):\ i<j\} + \#\{\l_i\in S(P):\ i\geq j_{P,h+1},\ i<j\} } 
\left ( 1- t^{\#\{\l_i\in S(P):\ i\leq j_{P,h},\ i<j\} }\right) \cdot \\}
\ensuremath{& \cdot\left( 1 - \ t^{\#\{\l_i\in S(P):\ i\geq j_{P,h+1},\  i\geq j\} }
\right ) \ e^1_{j} }
\end{aligned}\end{equation}}
\bigskip

By separating in the first sum the case $j\geq j_{P,h+1}$ from the case $j\leq j_{P,h}$ we have:
\bigskip
 
{\small \begin{equation}\begin{aligned}\ensuremath{&\label{delta2tsep}\partial_2(e^2_{P,h})\ =\ 
\sum_{ {\tiny
\begin{array}{c}
   \l_j \in S(P) \\ j \geq j_{P,h+1}  
\end{array}} }   t^{\#\{\l_i\in U(P):\ i<j\} + \#\{\l_i\in S(P):\ j_{P,h+1}\leq i< j\}} \left(  
1 - t^{\#\{\l_i\in S(P):\ i\leq j_{P,h}\}}   \right )\ e^1_{j}\  +\\ }
\ensuremath{& + \sum_{ {\tiny
\begin{array}{c}
   \l_j \in S(P) \\ j \leq j_{P,h}  
\end{array}} }   t^{\#\{\l_i\in U(P):\ i<j\} + \#\{\l_i\in S(P):\  i< j\}} \left(  
t^{\#\{\l_i\in S(P):\ j_{P,h+1}\leq i \}} - 1   \right )\ e^1_{j}\  + \\ }
\ensuremath{& +\sum_{  {\tiny
\begin{array}{c}
\l_j\in  C(P)\cap U(P)
\end{array}}
}  t^{\#\{\l_i\in U(P):\ i<j\} + \#\{\l_i\in S(P):\ i\geq j_{P,h+1},\ i<j\} } 
\left ( 1- t^{\#\{\l_i\in S(P):\ i\leq j_{P,h},\ i<j\} }\right) \cdot \\ }
\ensuremath{& \cdot \left( 1 - \ t^{\#\{\l_i\in S(P):\ i\geq j_{P,h+1},\  i\geq j\} }
\right ) \ e^1_{j} }
\end{aligned}\end{equation}}
\bigskip

In particular, let $P$ be a double point. Then $h$ takes only the value $1,$ and $j_{P,1}, \ j_{P,2}$ are the indices of the two lines passing through $P.$  So   formula (\ref{delta2tsep}) becomes 
\bigskip

{\small \begin{equation}\begin{aligned}\ensuremath{&\label{delta2tdoppi}\partial_2(e^2_{P,1})\ =\ 
  t^{\#\{\l_i\in U(P):\ i<j_{P,2}\}} \left(  
1 - t   \right )\ e^1_{j_{P,2}}\  
+    t^{\#\{\l_i\in U(P):\ i<j_{P,1}\}}  \left(  
t - 1   \right )\ e^1_{j_{P,1}}\  +\\ }
\ensuremath{&
+\sum_{  {\tiny
\begin{array}{c}
\l_j\in  C(P)\cap U(P)
\end{array}}
}  t^{\#\{\l_i\in U(P):\ i<j\}  } 
\left ( t-1 \right)^2  \ e^1_{j} }
\end{aligned}\end{equation}}
\bigskip

Since $\partial_2$ is divisible by $t-1$ we can rewrite (\ref{delta2tdoppi}) as
\bigskip
{\small \begin{equation}\label{delta2tfac}\partial_2(e^2_{P,1}) \ =\ (t-1)\ \tilde{\partial}_2(e^2_{P,1})\end{equation}}
where

{\small \begin{equation}\begin{aligned}\ensuremath{& \label{delta2trid}
  \tilde{\partial}_2(e^2_{P,1})\ =\ 
   t^{\#\{\l_i\in U(P):\ i<j_{P,2}\}} \ e^1_{j_{P,2}}\  
-    t^{\#\{\l_i\in U(P):\ i<j_{P,1}\}} \ e^1_{j_{P,1}}\  +\\ }
 \ensuremath{&
+\sum_{  {\tiny
\begin{array}{c}
\l_j\in  C(P)\cap U(P)
\end{array}}
}  t^{\#\{\l_i\in U(P):\ i<j\}  } 
\left ( 1- t \right)  \ e^1_{j} }
\end{aligned}\end{equation}}
\bigskip

\section{A proof in particular cases}\label{sec:proves} We give a proof of conjecture $(1)$ with further hypotheses on $\A.$  

Notice that the rank of $\partial_2$ is $n-1$ (the sum of all rows vanishes). Then  the arrangement has no monodromy iff the only elementary divisor of $\partial_2$ is $\varphi_1:=t-1,$ so $\partial_2$ diagonalizes to $\oplus_{i=1}^{n-1}\ \varphi_1.$ \ This is equivalent to  the reduced boundary $\tilde{\partial}_2$ having an invertible minor of order $n-1.$     

Let $\G$  be the graph of double points. A choice of an admissible coordinate system gives a total ordering on the lines so it induces a labelling, varying between $1$ and $n,$  on the set of vertices $V\Gamma$ of $\Gamma.$ Let $T$ be a spanning tree of $\G$ (with induced labelling on $VT$).

\begin{df}
\label{defcollapse} We say that the induced labelling on $VT=V\G$ is \emph{ very good} (with respect to the given coordinate system) if the sequence $n,\dots,1$ is a \emph{collapsing ordering} on $T.$  In other words, the graph obtained by $T$ by removing all vertices with label $\geq i$ and all edges having both vertices with label $\geq i,$ is a tree, for all $i=n,\dots, 1.$ 

We say that the spanning tree $T$ is  \emph{very good} if there exists an admissible coordinate system such that the induced labelling  on $VT$ is very good (see fig.\ref{fig1}). 
\end{df}  

\begin{rmk} \label{rmk:tree} \begin{enumerate} 
\item A labelling over a spanning tree $T$ gives a collapsing ordering iff for each vertex $v,$ the number of adjacent vertices with lower label is $\leq 1.$
In this case, only the vertex labelled with $1$ has no lower labelled adjacent vertices (by the connectness of $T$).
\item Given a collapsing ordering over $T,$ for each vertex $v$ with label $i_v>1,$ let $\l(v)$ be the edge  which 
connects $v$ with the unique adjacent vertex with lower label; by giving to $\l(v)$ the label $i_v+\frac{1}{2},$ we 
obtain a discrete Morse function on the graph $T$ (see \cite{forman}) with unique critical cell given by the vertex 
with label $1.$  The set of all pairs $(v,\l(v))$ is the acyclic matching which is associated to this Morse function.  
\end{enumerate}
\end{rmk}

Let us indicate by $\G_0$ the linear tree with $n $ vertices: we think as $\G_0$ as a $CW$-decomposition of the real segment $[1,n],$ with vertices $\{j\}, \ j=1,\dots, n,$ and edges the segments $[j,j+1],$ $j=1,\dots,n-1.$   
\begin{df} \label{def:good} We say that a labelling induced by some coordinate system on the tree $T$ is \emph{good} if there exists a permutation $i_1,\dots, i_n$ of $1,\dots , n$ which gives a collapsing sequence both for $T$ and for $\G_0.$ In other words, at each step we always remove either the maximum labelled vertex or the minimum, and this is a collapsing sequence for $T.$  

We say that $T$ is \emph {good} if there exists an admissible coordinate system such that the induced labelling on $VT$ is good (see fig.\ref{fig2}). 
\end{df} 
Notice that a very good labelling  is a good labelling where at each step one removes the maximum vertex.

Consider some arrangement $\A$ with  graph  $\G$  and labels on the vertices which are induced by some coordinate system.   
Notice that changes of coordinates act on the labels by giving all possible {\it cyclic permutations}, which are generated by the transformation  $i\to i+1\ \mbox{mod } n.$  So, given a labelled tree $T,$ checking if $T$ is very good (resp. good) consists in verifying if some cyclic permutation of the labels is very good (resp. good). This property depends not only on the "shape" of the tree, but also on how the lines are disposed in $\R^2$ (the associated oriented matroid). In fact, one can easily find arrangements where some "linear" tree is very good, and others where some linear tree is not good.

\begin{df}\label{defgood} We say that an arrangement $\A$ is \emph{very good} (resp. \emph{good}) if $\G$ is connected and has a very good (resp. good) spanning tree.     
\end{df}

It is not clear if this property is combinatorial, i.e. if it depends only on the lattice.  Of course, $\A$ very good implies $\A$ good.

\bigskip

\begin{teo}\label{teo1}  Let $\A$ be a good arrangement. Then $\A$ is a-monodromic.  
\end{teo}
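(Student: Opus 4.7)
By the observation preceding the theorem, it suffices to exhibit an $(n-1)\times(n-1)$ minor of the reduced boundary $\tilde{\partial}_2$ whose determinant is a unit of $R = A[t^{\pm 1}]$. The plan is to select, via a good spanning tree $T$ of $\G$, exactly $n-1$ double-point $2$-cells whose reduced boundaries assemble into an upper triangular minor with monomial diagonal. Fix an admissible coordinate system so that $T$ receives a good labelling, and let $i_1, \ldots, i_n$ be the corresponding joint collapsing ordering of $T$ and $\G_0$ (Definition \ref{def:good}). Since at each step $i_k$ is either the minimum or the maximum of the remaining labels $\{i_k, \ldots, i_n\}$, these remaining labels always form an integer interval $[a_k, b_k]$. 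For $k = 1, \ldots, n-1$, let $P_k$ denote the unique edge of $T$ incident to the leaf $i_k$ inside the subtree spanned by $\{i_k, \ldots, i_n\}$, joining $i_k$ to some $v_k \in \{i_{k+1}, \ldots, i_n\}$. By construction each $P_k$ is a double point of $\A$ with incident line indices $i_k, v_k \in [a_k, b_k]$.

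Form the minor $M$ of $\tilde{\partial}_2$ with rows indexed by $e^2_{P_k,1}$ and columns indexed by $e^1_{i_j}$ for $k, j = 1, \ldots, n-1$, discarding the column $e^1_{i_n}$. I claim $M$ is upper triangular with unit diagonal. For triangularity, inspection of (\ref{delta2trid}) shows that the support of $\tilde{\partial}_2(e^2_{P_k,1})$ is contained in the index set $C(P_k) = [\min(i_k, v_k), \max(i_k, v_k)] \subseteq [a_k, b_k]$, since the two distinguished terms involve $e^1_{j_{P_k,1}}$ and $e^1_{j_{P_k,2}}$ while the remaining sum runs over $\l_j \in C(P_k) \cap U(P_k)$. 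Whenever $j < k$, the label $i_j$ was removed before step $k$ and so lies outside $[a_k, b_k]$, forcing $M_{k,j} = 0$. For the diagonal, the coefficient $M_{k,k}$ of $e^1_{i_k}$ in $\tilde{\partial}_2(e^2_{P_k,1})$ equals $\pm t^{\#\{\l_i \in U(P_k)\,:\, i < i_k\}}$ by (\ref{delta2trid}), which is a unit of $R$. Hence $\det M$ is a signed monomial in $t$, therefore invertible, and $\A$ is a-monodromic.

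The pivotal point is the support bound for double-point reduced boundaries, which is immediate from (\ref{delta2trid}): the whole expression lives in the coordinate subspace spanned by those $e^1_j$ with $j \in C(P_k)$, i.e.\ with $j$ between the two incident line indices. The ``good'' hypothesis on the spanning tree is then precisely what is needed so that at each step the already processed labels sit outside the support window of the newly selected double-point generator. Beyond this structural observation the argument is routine bookkeeping; I foresee no further obstacle, and in particular no delicate homological argument enters after the reduction to $\tilde{\partial}_2$.
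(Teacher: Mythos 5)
Your proof is correct and is essentially the paper's argument with the induction unrolled: the paper removes the extremal line one step at a time and obtains a block upper-triangular boundary matrix as in (\ref{bordoind}), which is exactly your triangular minor $M$ built all at once. Both versions rest on the same key observation — that by (\ref{delta2trid}) the support of $\tilde{\partial}_2(e^2_{P,1})$ for a double point lies in the index interval $C(P)$, which the good collapsing order keeps inside the interval of not-yet-removed labels — so no genuinely new route is involved.
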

\begin{proof}  We use induction on the number $n$ of lines,  the claim being trivial for $n=1.$ 

Take a suitable coordinate system as in definition (\ref{defgood}), such that the graph $\G$ has a  spanning tree $T$ with good labelling.  Assume for example that at the first step we remove the last line, so the graph $\G'$  of the arrangement $\A':=\A\setminus \{\l_n\}$ is connected and the  spanning tree $T'$ obtained by removing the vertex $\{\l_n\}$ and the "leaf-edge" \ $(\l_n,\l_j)$\  (for some $j<n$) has a good labelling.    

There are $n-1$ double points which correspond to the edges of $T:$  only one of these is contained in $\l_n,$ namely $\l_n\cap\l_j$ (see remark \ref{rmk:tree}). Let $\Dp:=\{d_1,\dots,d_{n-1}\}$  be the set of such double points, with $d_{n-1}=\l_n\cap\l_j.$  Let also $\Dp ':=\{d_1,\dots,d_{n-2}\},$ which corresponds to the edges of $T'.$   Let $(\CW(\Dp)_*,\partial_*)$  (resp. $(\CW(\Dp')_*,\partial_*')$) be the subcomplex of  $\CW(\A)_*$ generated by the $2$-cells which correspond to $\Dp$ (resp. $\Dp'$  ): then $\CW(\Dp)_2= \oplus_{1\leq i\leq n-1} R e_j\ ,$ and   $\CW(\Dp')_2= \oplus_{1\leq i\leq n-2} R e'_j\ .$   
Notice that, by the explicit formulas given in part \ref{part2}, the component of the boundary $\partial_2(e_j)$ along the $1$-dimensional generator corresponding to $\ell_n$ equals $-\varphi_1$ for $j=n-1,$ and vanishes for  $j=1,\dots,n-2.$  Actually, the natural map taking $e'_j$ into $e_j,$ $j=1,\dots,n-2,$ identifies $\CW(\Dp')_*$ with the sub complex of $\CW(\Dp)_*$ generated by the $e_j$'s, $j=1,\dots,n-2$

\begin{equation}\label{bordoind} \partial_2\ =\ \left[ 
\begin{tabular} {p{3cm} | c}
& \\
\centering{$\partial_2'$} & *\\
 & \\
 \hline
 \centering{$0$} & $-\varphi_1$ 
 \end{tabular}
 \right]\end{equation} 
 \bigskip
 
 Then by induction $\partial'_2$ diagonalizes to $\oplus_{j=1}^{n-2}\ \varphi_1.$ Therefore $\partial_2$ diagonalizes to $\oplus_{j=1}^{n-1}\ \varphi_1,$ which gives the thesis.  
 
 If at the first step we remove the first line, the argument is similar, because $\partial_2(e_j)$ has no non-vanishing components along the generator corresponding to $\l_1.$  
\end{proof}
 \bigskip
 
 Let us consider a different situation.
 
 \begin{df}\label{conjfree} We say that a subset $\Sigma$ of the set of singular points $Sing(\A)$  of the arrangement $\A$ is \emph{conjugate-free} (with respect to a given admissible coordinate system) if $\forall P\in \Sigma$  the set  $U(P)\cap C(P)$ is empty.  
 
 An arrangement $\A$ will be called \emph{conjugate-free} if $\G$ is connected and contains a spanning tree $T$ such that the set of points in $Sing(\A)$ that correspond to the edges $ET$ of $T$ is conjugate-free (see fig.\ref{fig3}).
 \end{df} 
Let  $\Sigma$ be conjugate-free: it follows from formula (\ref{delta2tsep}) that  the boundary of   all generators $e^2_{P,h},\ P\in\Sigma,$   can have non-vanishing components only along the lines which contain $P.$

 \begin{teo}\label{teo:conjfree} Assume that $\A$ is conjugate-free. Then $\A$ is a-monodromic. 
 \end{teo}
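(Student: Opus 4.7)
The strategy is to produce an $(n-1) \times (n-1)$ minor of $\tilde{\partial}_2$ which is a unit in $R = A[t^{\pm 1}]$; as recalled at the beginning of Section~\ref{sec:proves}, this is equivalent to $\A$ being a-monodromic.

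Fix an admissible coordinate system for which $\A$ is conjugate-free, and let $\Sigma = \{P_1, \ldots, P_{n-1}\}$ be the set of double points corresponding to the edges of a conjugate-free spanning tree $T$. Denote by $j^s_1 < j^s_2$ the indices of the two lines through $P_s$. Since $U(P_s) \cap C(P_s) = \emptyset$, the sum over $C(P) \cap U(P)$ in formula~(\ref{delta2trid}) is empty, and one obtains
$$\tilde{\partial}_2(e^2_{P_s,1}) \;=\; t^{a_s}\, e^1_{j^s_2} \;-\; t^{b_s}\, e^1_{j^s_1},$$
with $a_s, b_s \geq 0$. Collecting these $n-1$ rows produces an $(n-1) \times n$ submatrix $M$ of $\tilde{\partial}_2$ in which every row has exactly two nonzero entries, both units in $R$, placed at the two columns corresponding to the endpoints of the associated edge of $T$. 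Thus $M$ is the oriented edge-to-vertex incidence matrix of $T$, weighted by monomial units in $R$.

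Deleting the column indexed by some fixed vertex $u$ of $T$ from $M$ gives an $(n-1) \times (n-1)$ submatrix $M_u$; I claim its determinant is of the form $\pm t^c$, hence a unit. This follows by induction on $n$. For $n = 2$ the matrix $M_u$ is a single entry $\pm t^?$. For $n \geq 3$, pick a leaf $w \neq u$ of $T$ (possible since $T$ has at least two leaves); the column of $M_u$ indexed by $w$ contains a single nonzero entry, coming from the unique edge of $T$ incident to $w$. Laplace-expanding along this column reduces the computation to the analogous determinant for the pruned tree $T \setminus \{w\}$ on $n-1$ vertices with the same column $u$ removed, and the induction concludes. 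No genuine obstacle is expected here: the conjugate-free hypothesis is tailor-made to kill the extra summand over $C(P) \cap U(P)$ in~(\ref{delta2trid}), so that each boundary reduces to a single weighted ``edge'' of $T$; once this two-term local shape is established, invertibility of the minor is the classical incidence-matrix argument for trees.
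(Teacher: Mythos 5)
Your proof is correct and follows essentially the same route as the paper: the conjugate-free hypothesis kills the $C(P)\cap U(P)$ summand, the rows indexed by the tree edges become a unit-weighted incidence matrix of $T$, and an invertible $(n-1)\times(n-1)$ minor is extracted from it. The only difference is cosmetic — where the paper cites the classical unimodularity of tree incidence matrices, you prove it inline by the leaf-pruning induction (which also cleanly handles the monomial unit weights $t^{a_s}, t^{b_s}$ that the paper implicitly normalizes away).
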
 
\begin{proof} The sub matrix of $\partial_2$ which corresponds to the double points $ET$ is $\varphi_1$-times the incidence matrix of the tree $T.$  Such matrix is the boundary matrix of the complex which computes the  $\Z$-homology of $T:$ it is a unimodular rank-$(n-1)$ integral matrix (see for example \cite{biggs}). From this the result follows straightforward. \end{proof}
\bigskip

We can have a mixed situation between definitions \ref{defgood} and \ref{conjfree} (see fig.\ref{fig4}). 

\begin{teo} \label{mixed} Assume that $\G$ is connected and contains a spanning tree $T$ which reduces, after a sequence of moves where we remove either the maximum or the minimum labelled vertex, to a subtree $T'$ which is conjugate-free. Then $\A$ is a-monodromic.   
\end{teo}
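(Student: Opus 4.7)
The plan is to combine the inductive peel-off argument from Theorem~\ref{teo1} with the unimodular tree-incidence argument from Theorem~\ref{teo:conjfree}. As in those proofs, it suffices to exhibit an invertible $(n-1)\times(n-1)$ minor of the reduced boundary $\tilde\partial_2$ restricted to the sub-complex $\CW(\Dp)_*$ generated by the $2$-cells $e^2_d$ with $d\in ET$: together with $\partial_2=\varphi_1\cdot\tilde\partial_2$, this forces $\mathrm{Im}(\partial_2)=\varphi_1\cdot\ker\partial_1$, hence $\A$ is a-monodromic.

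Record the chain of sub-arrangements $\A=\A_n\supsetneq\A_{n-1}\supsetneq\dots\supsetneq\A_k=\A''$ obtained from the hypothesized sequence of max/min moves, with corresponding spanning sub-trees $T=T_n\supsetneq\dots\supsetneq T_k=T'$; at step $i$ the leaf-vertex $\l_{m_i}$ and its unique leaf-edge $d_i\in ET$ are removed. Because each move deletes an extreme label, the surviving label set is at every stage a contiguous range of indices; in particular $V\A''=\{\l_a,\dots,\l_b\}$ is contiguous. Order the rows of $\CW(\Dp)_2$ by placing the edges of $T'$ first and the peeling edges in reverse peeling order $d_{n-k},d_{n-k-1},\dots,d_1$ afterwards, and order the columns of $\CW(\A)_1$ analogously with the lines of $V\A''$ first and the peeled-off lines $\l_{m_{n-k}},\dots,\l_{m_1}$ in reverse peeling order afterwards. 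With this ordering, the claim is that
\[
\tilde\partial_2|_{\CW(\Dp)_2}\;=\;\begin{pmatrix}\tilde\partial_2^{T'} & 0 \\ * & D\end{pmatrix},
\]
where $\tilde\partial_2^{T'}$ is the reduced boundary on the $T'$-subcomplex of $\A''$ (of size $(k-1)\times k$) and $D$ is an $(n-k)\times(n-k)$ lower triangular block whose diagonal entries are units of $R$.

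Vanishing of the upper-right block is immediate from~(\ref{delta2trid}): if $d\in ET'$ then both endpoints of $d$ lie in $V\A''$, so $C(d)\subseteq V\A''$ and the support of $\tilde\partial_2(e^2_d)$ meets no peeled-off column. Combined with the conjugate-freeness of $T'$ in $\A''$, which, since $C(d)\subseteq V\A''$, is equivalent to conjugate-freeness in $\A$, the block $\tilde\partial_2^{T'}$ becomes the weighted signed incidence matrix of the tree $T'$ with every weight a unit of $R$, and therefore admits an invertible $(k-1)\times(k-1)$ minor by the argument of Theorem~\ref{teo:conjfree}. For the peel-off block $D$: the diagonal entry of the row of $d_i$ is the coefficient $\pm t^{\bullet}$ of $e^1_{m_i}$ in $\tilde\partial_2(e^2_{d_i})$ provided by~(\ref{delta2trid}), while the entry of that same row in the column of a previously-peeled line $\l_{m_j}$ (with $j<i$) must vanish, because $\l_{m_j}$ lies outside the contiguous index-range of $\A_{i-1}$ and hence outside $C(d_i)$; so $D$ is lower triangular with unit diagonal.

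Together, the invertible $(k-1)\times(k-1)$ minor of $\tilde\partial_2^{T'}$ and the full $D$ produce an invertible $(n-1)\times(n-1)$ minor of $\tilde\partial_2|_{\CW(\Dp)_2}$, and the conclusion follows exactly as in Theorem~\ref{teo:conjfree}. The main obstacle is the combinatorial bookkeeping needed to confirm the block shape: one must check that max/min peeling keeps the surviving labels in a contiguous range, that previously-peeled extremes always lie outside the $C$-interval of any later peeling edge, and that the rows indexed by edges of $T'$ never spill into peeled-off columns. Once these combinatorial inclusions are established, the algebra reduces to a direct combination of Theorems~\ref{teo1} and~\ref{teo:conjfree}.
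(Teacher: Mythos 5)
Your proposal is correct and is essentially the paper's own argument: the paper runs the same peeling as an induction on $n$, noting at each step that removing the extreme-labelled line gives a block shape as in (\ref{bordoind}) with a unit (times $\varphi_1$) in the corner, and bottoms out at Theorem \ref{teo:conjfree} for the conjugate-free core; you have simply unrolled that induction into a single block-triangular decomposition with the lower-triangular block $D$. Your explicit checks (contiguity of the surviving labels, $C(d)\subseteq V\A''$, and the equivalence of conjugate-freeness in $\A''$ and in $\A$) are details the paper leaves implicit, but the route is the same.
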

\begin{proof} The thesis follows easily by induction on the number $n$ of lines. In fact, either $T$ is conjugate-free, and we use theorem \ref{teo:conjfree}, or one of the subtrees $T\setminus \{\l_n\},$ \  $T\setminus \{\l_1\}$ \ satisfies again the hypotheses of the theorem. Assume that it is $T''=T\setminus \{\l_n\}.$ Then the boundary map $\partial_2$ restricted to the $2$-cells corresponding to $ET''$ has a shape similar to (\ref{bordoind}). Therefore by induction we conclude. 
\end{proof} 
\bigskip

Some examples are given in section \ref{sec:ex}.

\begin{rmk} In all the theorems in this part, we have proven a stronger result: namely,  the subcomplex spanned  by the generators corresponding to the double points is  a-monodromic.
\end{rmk}

\section{Examples}\label{sec:ex}

\newcommand{\ut}{{\underline{t}}}

 In this section we give examples corresponding to the various definitions of section \ref{sec:proves}. We include the computations of the local homology of the complements. 
 
In figure \ref{fig1} we show an arrangement having a very good tree (def \ref{defcollapse}) and the associated sequence of contractions.
\begin{figure}[h]
\centering
  \subfigure
    {\includegraphics[width=0.5\textwidth]{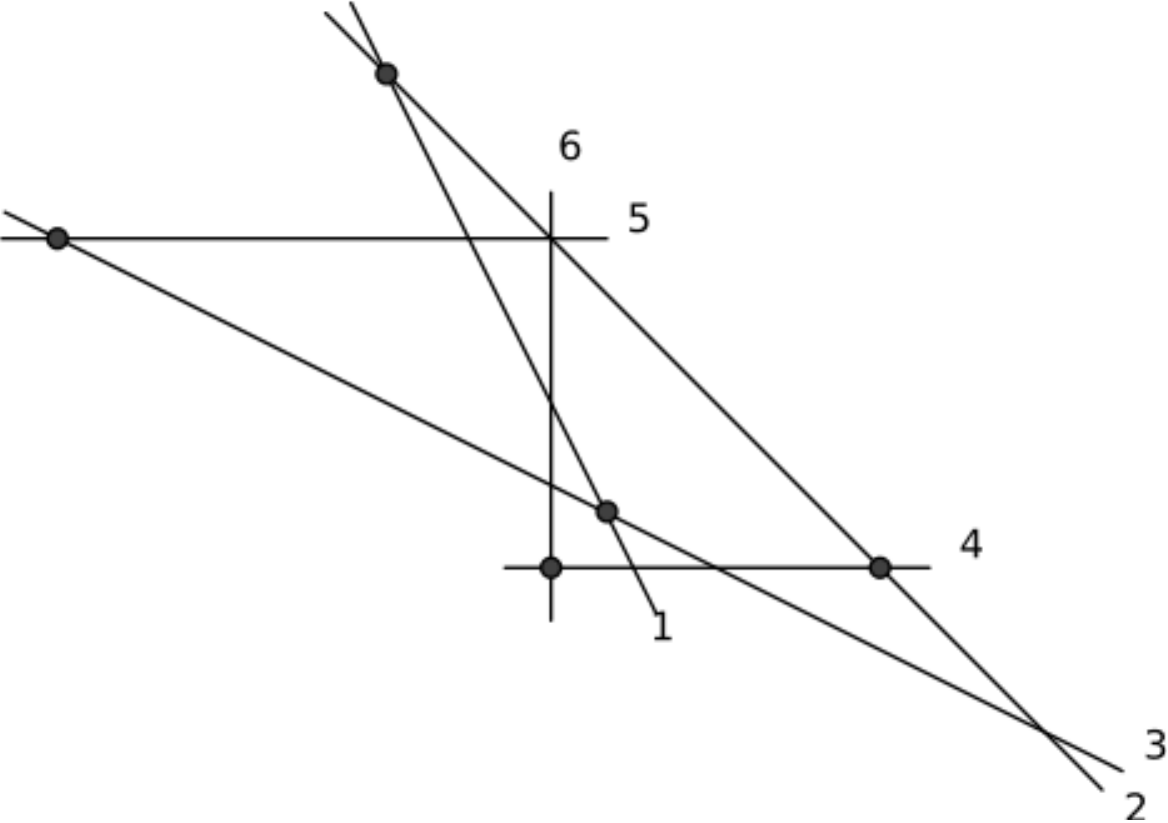}}
  \subfigure
    {\includegraphics[width=0.5\textwidth]{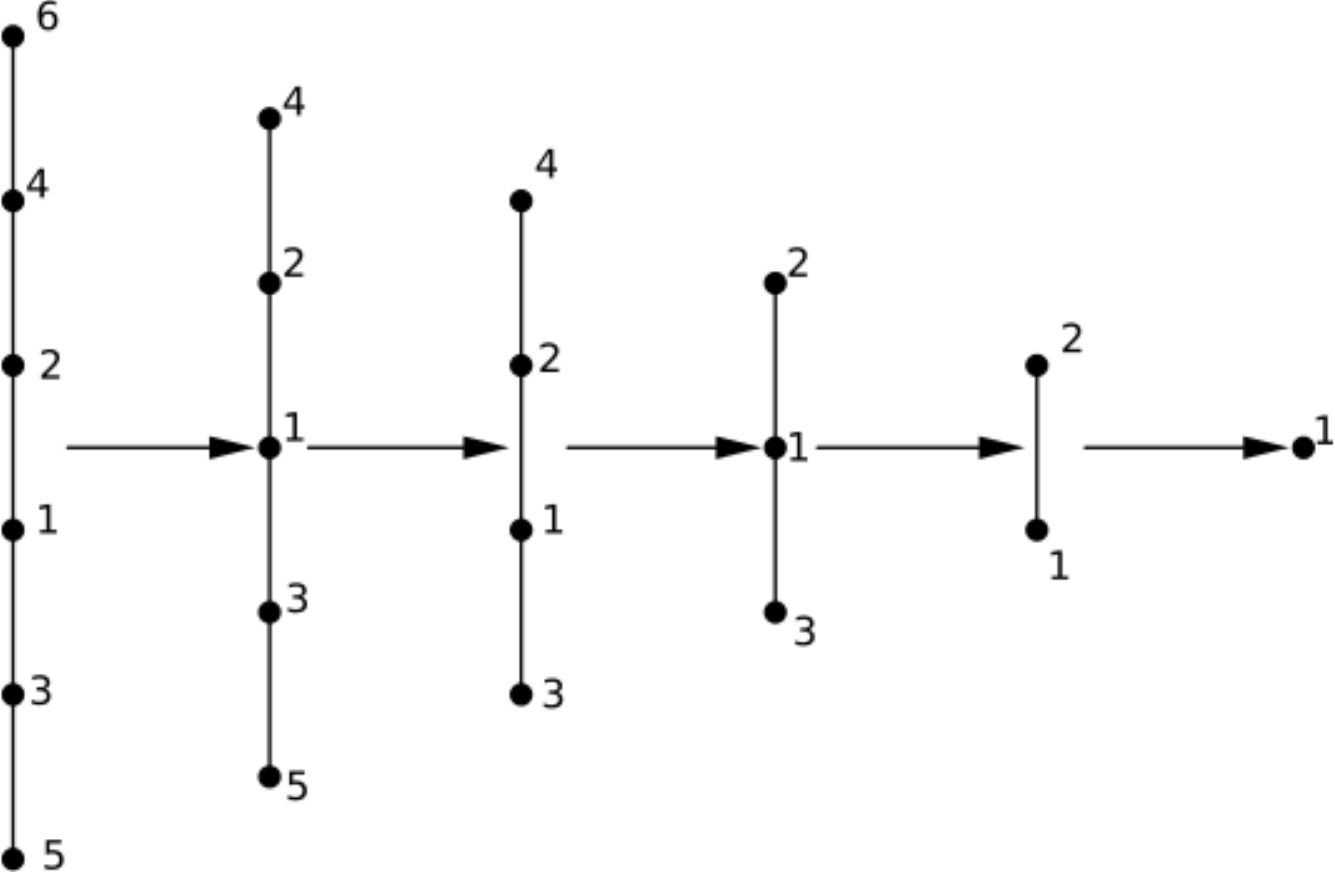}}
    \caption{}\label{fig1}
\end{figure}

\newpage

 In figure \ref{fig2} an arrangement with a good tree is given (def \ref{def:good}) together with its sequence of contractions.

\begin{figure}[h]
\centering
  \subfigure
    {\includegraphics[width=0.5\textwidth]{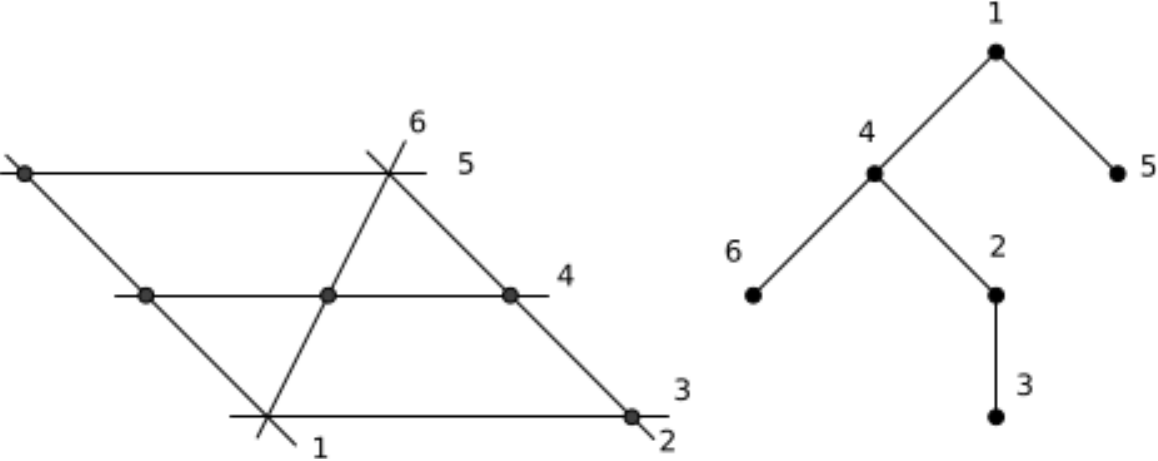}}
  \subfigure
   {\includegraphics[width=0.5\textwidth]{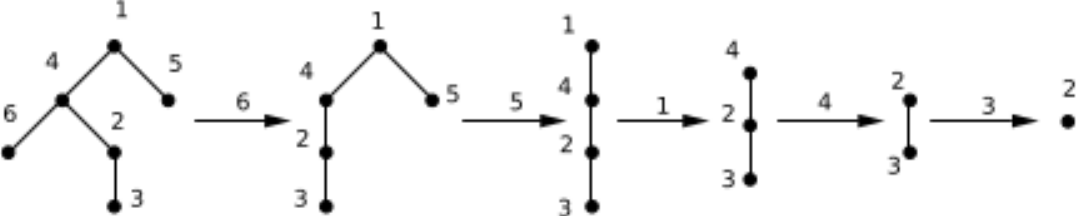}}
   \caption{}
   \label{fig2}
\end{figure}
\newpage
An arrangement having a tree which is both conjugate free (see definition \ref{conjfree}) and good is depicted in figure 
\ref{fig3}

\begin{figure}[h]
\centering
  \subfigure
    {\includegraphics[width=0.8\textwidth]{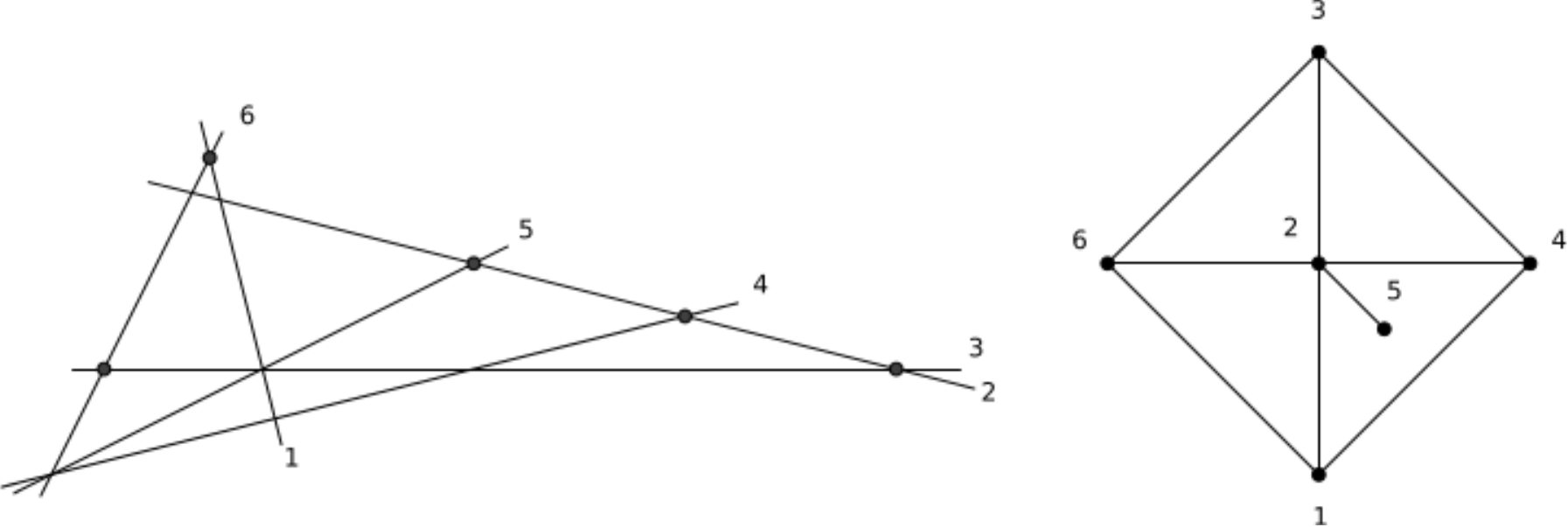}}
  \subfigure
    {\includegraphics[width=0.2\textwidth]{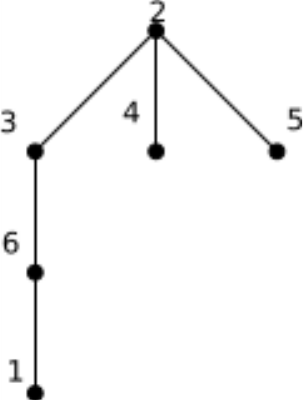}}
    \caption{}\label{fig3}
\end{figure}
\newpage
In figure \ref{fig4} an arrangement with 
a tree which after  2 admissible contractions  becomes  conjugate free is shown (see thm.\ref{mixed}).
\begin{figure}[h]
 \centering
   \subfigure
     {\includegraphics[width=0.7\textwidth]{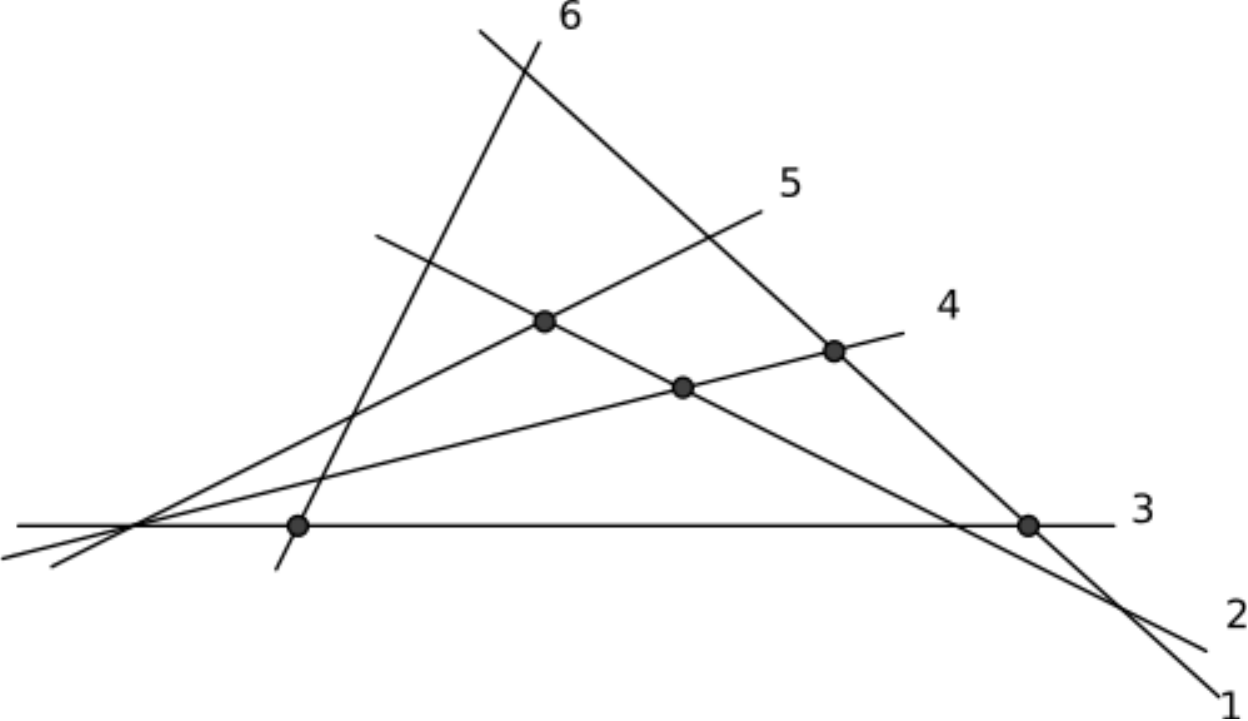}}
   \subfigure
        {\includegraphics[width=0.4\textwidth]{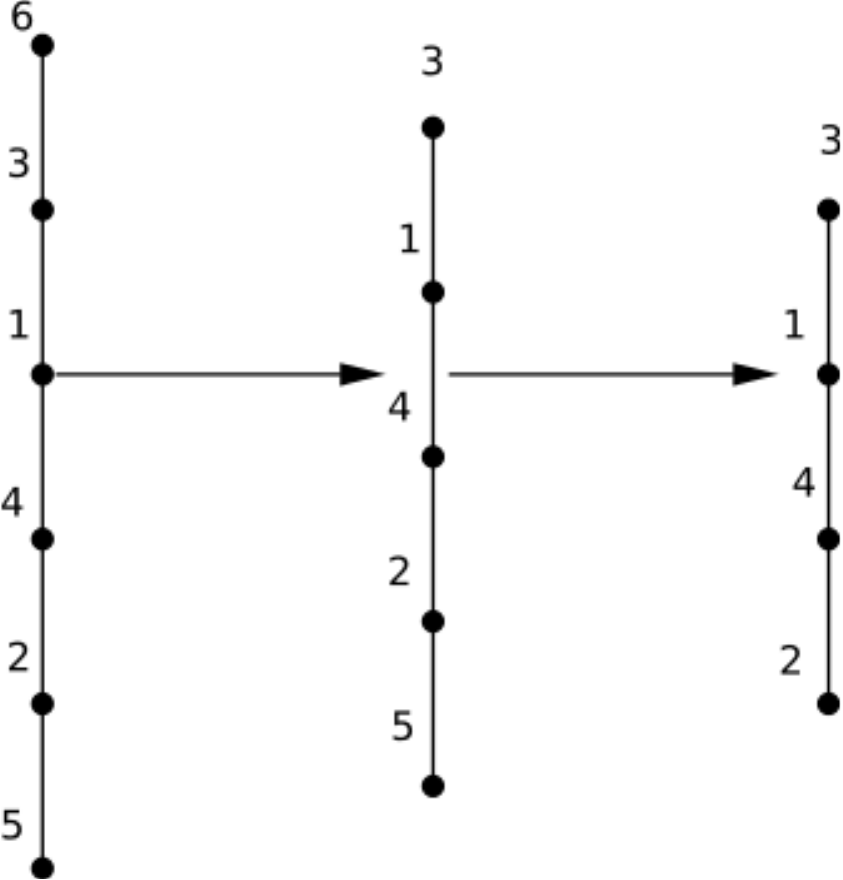}}
\caption{}\label{fig4}
\end{figure}
\newpage
Next we give some example of arrangements with non trivial monodromy. Notice that the graph of double points is disconnected in these cases. 

Notice also that in the first two examples one has non-trivial monodromy both for the given affine arrangement and its conifed arrangement in $\C^3;$ in the last example, the given affine arrangement has non trivial monodromy while its conification is a-monodromic.
\begin{figure}[h]
\centering
  {\includegraphics[width=0.5\textwidth]{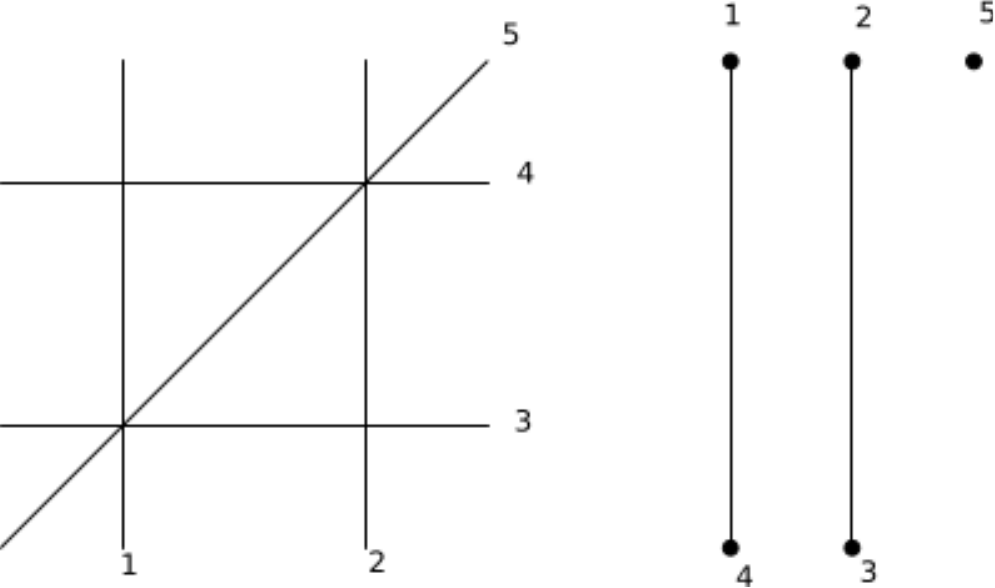}}
\caption{deconed A3 arrangement} 
\label{fig5}
\end{figure}
\\
\begin{align*}
 \ensuremath{& H_1(\M(\A),\Q[t^{\pm 
1}])\simeq\left(\frac{\Q[t^{\pm 1}]}{(t-1)}\right)^3\oplus\frac{\Q[t^{\pm 1}]}{(t^3-1)}}
\end{align*}
\\
\begin{figure}[h]
 \centering
 {\includegraphics[width=0.8\textwidth]{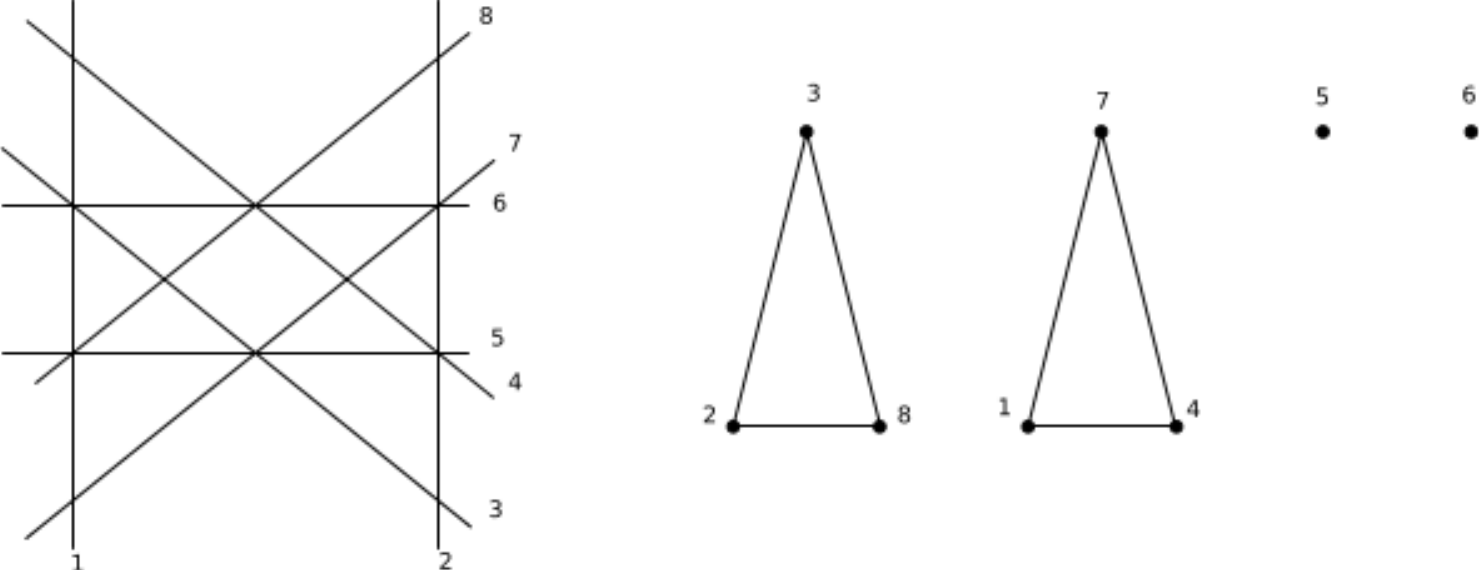}}
\caption{deconed Pappus arrangement}
\label{fig6}
\end{figure}
\\
\begin{align*}
 \ensuremath{& H_1(\M(\A),\Q[t^{\pm 
1}])\simeq\left(\frac{\Q[t^{\pm 1}]}{(t-1)}\right)^6\oplus\frac{\Q[t^{\pm 1}]}{(t^3-1)}}
\end{align*}
\\
\begin{figure}[h]
 \centering
{\includegraphics[width=0.8\textwidth]{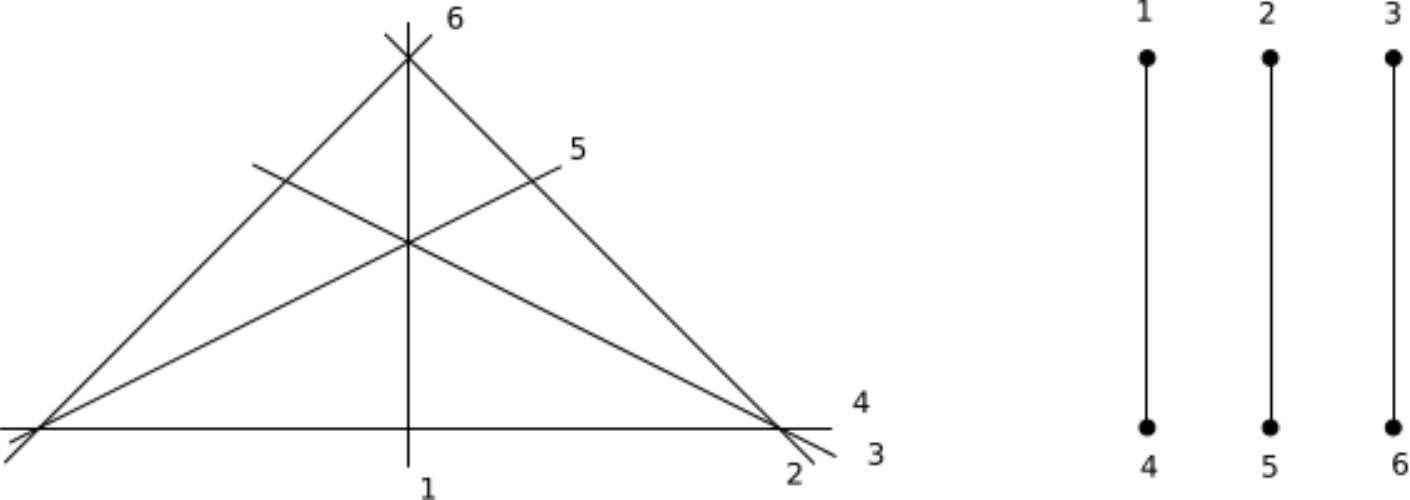}}
\caption{The "complete triangle" has non-trivial monodromy but its conification is a-monodromic} \label{completetriangle}
\end{figure}
\\
\begin{align*}
 \ensuremath{& H_1(\M(\A),\Q[t^{\pm 
1}])\simeq\left(\frac{\Q[t^{\pm 1}]}{(t-1)}\right)^4\oplus\frac{\Q[t^{\pm 1}]}{(t^3-1)}}
\end{align*}
\bigskip

We focus here on the structure of the fundamental groups of the above examples, in particular in case of a-monodromic arrangements.
\medskip

\ni For arrangement in fig.$\ref{fig1}$: after taking line $5$ to infinity we obtain an affine arrangement having only double points with two pairs of parallel lines, namely (the new) lines $2,\ 6$ and $4,\ \infty.$ Therefore 
$$\pi_1(\MA)=\Z\times \Z \times F_2 \times F_2.$$
\medskip

\ni We consider arrangement in fig.\ref{fig2} and in fig.\ref{fig5} together.  The deconed $A3$ arrangement in fig.\ref{fig5}  is a well known $K(\pi,1)$-arrangement: the fundamental group of the complement is the \emph{pure braid group} $P_4$ in $4$ strands. Notice that the projection onto the $y$ coordinate fibers the complement over $\C\setminus \{2\ pts\}$ with fiber $\C\setminus \{3\ pts\}.$  It is well known that this fibering is not trivial and we obtain a semi-direct product decomposition
$$\pi_1(\MA)=F_3 \rtimes F_2.$$

\ni The same projection gives a fibering of the complement of the arrangement in fig.\ref{fig2} over \ 
$\C\setminus \{3\ pts\}$ with fiber $\C\setminus \{3\ pts\}.$ Notice that this is also a non-trivial fibering, so 
we have a semi-direct decomposition
$$\pi_1(\MA)=F_3 \rtimes F_3.$$
In particular, we have an a-monodromic arrangement such that the fundamental group of the complement is not a direct product of free groups.
\medskip

\ni In the arrangement of fig.\ref{fig3} the line at infinity is transverse to the other lines. If we take line $5$ at infinity we get an affine arrangement with only double points, with two pairs of parallel lines $1,\ 3$ and $4,\ 6.$ Therefore  we obtain a decomposition of  $\pi_1(\MA)$ as in case of fig.\ref{fig1}.   
\medskip

The arrangement of fig.\ref{fig4} has only one triple point. By taking line $5$ to infinity we get an affine arrangement with only double points and one pair of parallel lines $3,\ 4.$  Therefore
$$\pi_1(\MA)=\Z^4 \times F_2.$$
\medskip

The \emph{complete triangle} in fig.\ref{completetriangle} becomes, after taking any line at infinity,  the affine arrangement $\A'$ which is obtained from the $A3$ deconed arrangement in fig.\ref{fig5} by adding  one more line $\overline{\elle}$ which is transverse to all the others. Therefore 
$$\pi_1(\MA)=\Z\times (F_3 \rtimes F_2).$$
\begin{rmk} It turns out that the arrangement  $\A'$ is a-monodromic. This is not a contradiction: in fact, one is considering two different local systems on $\M(\A')=\MA.$  The a-monodromic one associates to an elementary loop around $\overline{\elle}$ the $t-$multiplication.  This is different from the one obtained by exchanging one of the affine lines of the arrangement $\A$ in fig.\ref{completetriangle} with the infinity line. In this case we should associate to an elementary loop around $\overline{\elle}$ the $t^6-$multiplication, and then apply formula (\ref{delta2gen}).
\end{rmk}


\section{Free calculus}\label{sec:free}
In this section we reformulate our conjecture in terms of  Fox calculus.

Let $\A=\{l_1,\dots,l_n\}$ be as above; if we denote by $\beta_i$ an elementary loop around $l_i$ we have that the fundamental group $\pi_1(\M(\A)) $ is 
generated by $\beta_1,\dots,\beta_n$ and a presentation of this group is given for ex. in \cite{salvetti}. Let $R=\Q[t^{\pm1}]$ 
be as above with the given structure of $\pi_1(\M(\A))-$module.

We denote by $F_n$ the free group generated by $\beta_1,\dots,\beta_n.$ Let $\varphi: F_n \to\ <t>$ 
be the group homomorphism defined by $\varphi(\beta_i)=t$ for every $1\leq i \leq n$ where $<t>$ is the multiplicative 
subgroup of $R$ generated by $t$.  As in \cite{birman}, if $w$ is a word in the $\beta_j$'s, we use the notation $w^{\varphi}$ for $\varphi(w).$

Consider the algebraic complex which computes the local homology of $\M(\A)$ introduced in section \ref{part2}.  The following remark is crucial for the rest of this section: if $e^2_{P,j}$ is a two-dimensional generator corresponding to a  two-cell which is attached along the word  $w$ in the $\beta_j$'s, then $\left(\frac{\partial 
w}{\partial \beta_i}\right)^\varphi$ is the coefficient of $e^1_i$ of the border of $e^2_{P,j}$.  This easily follows  from the combinatorial calculation of local system homology.

Let  $l:F_n\longrightarrow \Z$ be the length function, given by
$$l(\beta_{i_1}^{\epsilon_1}\cdots \beta_{i_r}^{\epsilon_r} )=\sum_{k=1}^r \epsilon_k.$$
Then $\text{ker}\varphi$ is the normal subgroup of $F_n$ given by the words of lenght $0$.

 Each relation in the fundamental group $\pi_1(\M(\A))$ is a commutator (cfr. \cite{salvetti},\cite{elgate}), so it lies in 
$\text{ker}\varphi.$ 
 So, in the sequel, we consider only words in 
$\text{ker}\varphi$.

\begin{rmk}\label{rmk:eqconj1} The arrangement $\A$ is a-monodromic iff (by definition) the $\mathbb Q[t^{\pm 1}]$-module generated by $\partial_2(e^2_j),\ j=1,\dots,\nu_2,$ equals \ $(t-1)\text{ker}\partial_1.$\  One has: 
$\text{ker}\partial_1= \{ \sum_{j=1}^n\ x_j\ e^1_j\ :\ \sum_{j=1}^n\ x_j =0\}. $
\end{rmk}

 \noindent Let $R_j,\ j=1\dots,\nu_2,$ be a complete set of relations in $\pi_1(\M(\A))$.  We use now $e^2_j$ to indicate the two-dimensional generator  corresponding to a two-cell which is attached along the word $R_j.$ Then the boundary of $e^2_j$ is given by

 \begin{equation}\label{boundary2}
 \partial_2(e^2_j)=\ \sum_{i=1}^n\ \left(\frac{\partial R_j }{\partial \beta_i}\right)^\varphi\ e^1_i,\quad j=1,\dots,\nu_2.
 \end{equation}
Then by remark \ref{rmk:eqconj1} $\A$ is a-monodromic iff  each element of the shape
\begin{equation}\label{polinomi} P(t):= (1-t)\sum_{i=1}^n\ P_i(t)\ e^1_i,\quad \sum_{i=1}^nP_i(t)=0,\quad (P_i(t)\in\mathbb Q[t^{\pm 1}], i=1\dots,n)
\end{equation}
is a linear combination with coefficients in $\mathbb Q[t^{\pm 1}]$ of  the elements in (\ref{boundary2}), i.e.:
\begin{equation}\label{equazione1} P(t)=\sum_{j=1}^{\nu_2} Q_j(t) \partial_2(e^2_j),\quad Q_j(t)\in\Q[t^{\pm 1}].
\end{equation}

It is natural to ask about solutions with coefficients in $\mathbb Z[t^{\pm 1}]$ instead of $\mathbb Q[t^{\pm 1}].$
We say that $\A$ is a-monodromic over $\mathbb Z$ if  there is solution of (\ref{equazione1}) over $\mathbb Z[t^{\pm 1}]$ (when all the $P_i(t)$'s in (\ref{polinomi}) are in $\Z[t^{\pm 1}]$).
\bigskip

\begin{teo} \label{teo:suZ} The arrangement $\A$ is a-monodromic over $\Z$ iff  $\pi_1(\M(\A))$ is commutative modulo $[\text{ker}\varphi,\text{ker}\varphi].$ More precisely, $\A$ is a-monodromic over $\Z$ iff  
$$[F_n,F_n] = N [\kf,\kf], $$
 where $N$ is the normal subgroup generated by the relations $R_j$'s .
\end{teo}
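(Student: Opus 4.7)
The plan is to reformulate $\Z$-a-monodromicity, which by remark~\ref{rmk:eqconj1} reads $(1-t)\ker\partial_1\subseteq\mathrm{Im}(\partial_2)$, as a group-theoretic inclusion in $F_n$ via the \emph{Fox map}
\[
D:\kf\longrightarrow\ker\partial_1,\qquad D(w)\;:=\;\sum_{i=1}^n\Bigl(\frac{\partial w}{\partial\beta_i}\Bigr)^{\!\varphi}e^1_i.
\]
That $D(w)$ lies in $\ker\partial_1$ follows from the fundamental Fox identity: evaluating $\sum_i(\partial w/\partial\beta_i)(\beta_i-1)=w-1$ through $\varphi$ gives $(t-1)\partial_1(D(w))=t^{\ell(w)}-1$, which vanishes on $\kf$. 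Using the Leibniz rule together with $\varphi(u)=\varphi(v)=1$ for $u,v\in\kf$, one checks $D(uv)=D(u)+D(v)$ and $D(gug^{-1})=t^{\varphi(g)}D(u)$; thus $\kf^{ab}$ carries a natural $\Z[t^{\pm 1}]$-module structure (with $t$ acting by conjugation), $D$ vanishes on $[\kf,\kf]$, and it descends to a $\Z[t^{\pm 1}]$-linear map $\bar D:\kf^{ab}\to\ker\partial_1$.

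Next I would compute the images of $[F_n,F_n]$ and $N$ under $D$. A direct Fox computation gives $D([\beta_i,\beta_j])=(1-t)(e^1_i-e^1_j)$ and, more generally, $D([u,v])=(1-t^{\varphi(v)})d(u)+(t^{\varphi(u)}-1)d(v)\in(1-t)\ker\partial_1$, where $d(u):=\sum_i(\partial u/\partial\beta_i)^\varphi e^1_i$. Combined with the normality of $[F_n,F_n]$ and the $\Z[t^{\pm 1}]$-linearity of $\bar D$, this yields $D([F_n,F_n])=(1-t)\ker\partial_1$. On the other hand, by (\ref{boundary2}) we have $\partial_2(e^2_j)=D(R_j)$, and since $N\subseteq\kf$ is the normal closure of the $R_j$'s, $\mathrm{Im}(\partial_2)=D(N)$. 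Therefore $\A$ is $\Z$-a-monodromic iff $D([F_n,F_n])\subseteq D(N)$.

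To lift this back to an inclusion in $F_n$ I would invoke the fact that $\bar D:\kf^{ab}\to\ker\partial_1$ is an isomorphism of $\Z[t^{\pm 1}]$-modules. This can be verified by Nielsen--Schreier: taking $\{\beta_1^k\}_{k\in\Z}$ as a transversal for $\kf$, the elements $x_j:=\beta_j\beta_1^{-1}$ ($j\neq 1$) form a $\Z[t^{\pm 1}]$-basis of $\kf^{ab}$, and their images $\bar D(x_j)=e^1_j-e^1_1$ form a $\Z[t^{\pm 1}]$-basis of $\ker\partial_1$. Under this isomorphism, $D([F_n,F_n])\subseteq D(N)$ translates into $[F_n,F_n]\subseteq N\,[\kf,\kf]$ (using $[\kf,\kf]\subseteq[F_n,F_n]$); the reverse inclusion is automatic since $N\subseteq[F_n,F_n]$ (all relations $R_j$ are commutators \cite{salvetti,elgate}) and $[\kf,\kf]\subseteq[F_n,F_n]$. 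This yields the equality $[F_n,F_n]=N\,[\kf,\kf]$, equivalent to the commutativity of $\pi_1(\M(\A))=F_n/N$ modulo the image of $[\kf,\kf]$.

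The main obstacle is the isomorphism $\bar D\colon\kf^{ab}\xrightarrow{\sim}\ker\partial_1$: everything else reduces to routine Fox calculus, but this identification (essentially the Crowell exact sequence for the length map $\varphi\colon F_n\to\Z$) is what makes it possible to promote the inclusion of images inside $\ker\partial_1$ to an inclusion of subgroups of $F_n$ modulo $[\kf,\kf]$.
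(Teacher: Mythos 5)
Your proof is correct and follows essentially the same route as the paper: both reduce $\Z$-a-monodromicity to comparing the Fox-derivative images of $[F_n,F_n]$ and $N$ inside $(1-t)\ker\partial_1$, and both lift the comparison back to $F_n$ modulo $[\kf,\kf]$. The only difference is that where the paper invokes Blanchfield's theorem from Birman for this last step, you verify the needed injectivity (that $\bar D$ is an isomorphism onto $\ker\partial_1$) directly via Reidemeister--Schreier --- a self-contained check of exactly the special case required, and the same coset computation the paper itself carries out just after the theorem.
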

\proof  A set of generators for $(t-1)\text{ker}\partial_1$ as $\Zt$-modulo is given by all elements of the type
$$ P_{rs}:=(1-t)(e^1_r-e^1_s)\ ,\quad  r\not= s.$$
Such an element can be re-written in the form (\ref{boundary2}) as
$$P_{rs}=\  \sum_{i=1}^n\ \left(\frac{\partial [\beta_r,\beta_s] }{\partial \beta_i}\right)^\varphi\ e^1_i$$
where $[\beta_r,\beta_s]=\beta_r\beta_s\beta_r^{-1}\beta_s^{-1}.$ Now there exists an expression as in (\ref{equazione1}) for $P_{rs},$ with all $Q_j(t)\in \Zt$ iff 
\begin{equation}\label{equazione2}\left(\frac{\partial [\beta_r,\beta_s] }{\partial \beta_i}\right)^\varphi\ =\  
\left(\frac{\partial \prod_{j=1}^{\nu_2}\ R_j^{Q_j(\beta_1)}}{\partial \beta_i}\right)^\varphi,\quad i=1,\dots,n.
\end{equation}
Here $Q_j(\beta_1)\in \Z[F_n]$  is obtained by substituting $t$ with $\beta_1$ (any word of length one would give the same here). Moreover, for $R, w$ any words in  $\kf$ we set $R^w:=wRw^{-1},$ and for $a\in \Z$ we set
$R^{aw}:=R^w\dots R^w$\  ($a$ factors) if $a>0$ and $(R^{-1})^w\dots(R^{-1})^w$ \ ($|a|$ factors) for $a<0.$
Also, we set $R^{aw+bu}:=R^{aw}R^{bu}.$  Then equalities (\ref{equazione2}) come from standard Fox calculus.

Then from Blanchfield theorem (see \cite{birman}, chap. 3) it follows that 
$$[F_n,F_n] \subset N [\kf,\kf].$$
The opposite inclusion follows because, as we said before remark \ref{rmk:eqconj1}, for any arrangement one has $N\subset [F_n,F_n].$  \qed
\bigskip 

\begin{rmk} Condition in theorem \ref{teo:suZ} is equivalent to the equality  
$$\frac{F_n}{N[\kf,\kf]} = \frac{F_n}{[F_n,F_n]}= H_1(\M(\A);\Z).$$ 
\end{rmk}
\bigskip

Since $\kf\supset [F_n,F_n],$ so $[\kf,\kf]\supset [[F_n,F_n],[F_n,F_n]],$ it follows immediately from theorem \ref{teo:suZ} 
\bigskip

\begin{cor}\label{perfect} Assume that {\small $\pi_1(\M(\A))/ \pi_1(\M(\A))^{(2)}$} is abelian (iff \ {\small $\pi_1(\M(\A))^{(1)}=\pi_1(\M(\A))^{(2)}$}) where  ${\small \pi_1(\M(\A))^{(i)}}$ is the $i$-th element of the derived series of ${\small \pi_1(\M(\A)),\ i\geq 0.}$ Then $\A$ is a-monodromic over $\Z.$
\end{cor}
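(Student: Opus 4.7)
The statement is essentially a one-step application of theorem \ref{teo:suZ} combined with the standard fact that the length map $\varphi$ factors through abelianization, so I would organize the plan around making that reduction explicit.

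First I would set $G := \pi_1(\M(\A)) = F_n/N$ and let $\pi: F_n \to G$ be the quotient map. By theorem \ref{teo:suZ}, a-monodromicity over $\Z$ is equivalent to the equality $[F_n,F_n] = N[\kf,\kf]$. The inclusion $N[\kf,\kf] \subset [F_n,F_n]$ is automatic, since $N$ is generated by commutators (see the comment right before remark \ref{rmk:eqconj1}) and $\kf \supset [F_n,F_n]$ because $\varphi$ factors through the abelianization. So the only thing to check is $[F_n,F_n] \subset N[\kf,\kf]$.

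Next I would translate the hypothesis. The condition $G^{(1)} = G^{(2)}$ means $[G,G] = [[G,G],[G,G]]$. Taking preimages under $\pi$, this reads
\[
[F_n,F_n]\, N \;=\; [[F_n,F_n],[F_n,F_n]]\, N,
\]
and since $N \subset [F_n,F_n]$ we may drop the $N$ on the left, obtaining
\[
[F_n,F_n] \;\subset\; [[F_n,F_n],[F_n,F_n]] \cdot N.
\]
Now the key observation (already recorded in the remark preceding the corollary) is that $[F_n,F_n] \subset \kf$, and so $[[F_n,F_n],[F_n,F_n]] \subset [\kf,\kf]$. Substituting,
\[
[F_n,F_n] \;\subset\; [\kf,\kf]\cdot N \;=\; N[\kf,\kf],
\]
which is exactly the missing inclusion. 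Theorem \ref{teo:suZ} then gives a-monodromicity over $\Z$.

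There is essentially no obstacle here: the argument is a formal chase once one has theorem \ref{teo:suZ} in hand. The only point requiring care is to check that the hypothesis really lifts from $G$ to $F_n$ in the form $[F_n,F_n]\subset [[F_n,F_n],[F_n,F_n]]\cdot N$, but this is immediate from the definition $G^{(i)} = \pi([F_n,F_n]^{(i-1)})$ together with $N \subset [F_n,F_n]$. So the corollary follows in a couple of lines.
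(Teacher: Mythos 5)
Your proof is correct and follows essentially the same route as the paper: the paper's own justification is the one-line observation that $\kf\supset[F_n,F_n]$ implies $[\kf,\kf]\supset[[F_n,F_n],[F_n,F_n]]$, after which Theorem \ref{teo:suZ} applies; you have simply written out the lifting of the hypothesis $G^{(1)}=G^{(2)}$ to $[F_n,F_n]=[[F_n,F_n],[F_n,F_n]]\,N$ and the absorption of $N$ using $N\subset[F_n,F_n]$, which are exactly the implicit steps. No gaps.
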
  \qed
\bigskip

%

\ni Condition of corollary \ref{perfect} corresponds to the vanishing of the so called \emph{Alexander invariant} of  $\pi_1(\MA).$
 
As a subgroup of the free group $F_n,$ the group $\text{ker}\varphi$ is a free group   We use the Reidemeister-Schreier method to write an explicit list of generators.
Notice that for any fixed $1\leq j\leq n$, the set $\{\beta_j^k : k\in\Z\}$ is a Schreier right coset 
representative system for $F_n/\text{ker}\varphi$. Denote briefly by $s_{k,i}$ the element 
$s_{\beta_j^k,\beta_i}=\beta_j^k\beta_i\overline{(\beta_j^k\beta_i)}^{-1}=\beta_j^k\beta_i\beta_j^{-(k+1)}.$ Then
$$\text{ker}\varphi=<\{s_{k,i}:1\leq i\leq n, \ k\in\Z \}; s_{k,i}>$$
where $s_{k,i}$ is a relation if and only if $\beta_j^k\beta_i$ is freely equal to $\beta_j^{k+1};$  this happens if and only if $i=j$. So $\text{ker}\varphi$ is the free group generated by $\{s_{k,i}:k\in\Z,1\leq i\leq n, i\neq j\}.$ Its abelianization 
$$\text{ab}\left(\text{ker}\varphi\right)=\text{ker}\varphi/[\text{ker}\varphi,\text{ker}\varphi]$$
 is the free abelian group on the classes $\overline{s}_{k,i}$ of the generators $s_{k,i}$'s, $i\not= j.$
 Let
$$\text{ab}:\text{ker}\varphi\too \text{ab}\left(\text{ker}\varphi\right)$$
be the abelianization homomorphism.

Now we define the automorphism $\sigma$ of $\text{ker}\varphi$ by
$$ \sigma(s_{k,i})=s_{k+1,i}$$
which passes to the quotient, so it defines an automorphism (call it again $\sigma$) of $\text{ab}\left(\text{ker}\varphi\right)$. Therefore we may view 
$\text{ab}\left(\text{ker}\varphi\right)$ as a finitely genereted free $\Z[\sigma^{\pm 1}]$-module, with basis  
$\overline{s}_{0,i}$ with $1\leq i\leq n$ and $i\neq j$.


%



In this language  theorem (\ref{teo:suZ}) translates as
\bigskip

\begin{teo}\label{teo:congliesse} The arrangement $\A$ is a-monodromic over $\Z$ iff the submodule $(1-\sigma)\text{ab}(\kf)$ of $\text{ab}(\kf)$ is generated by $\text{ab}(R_j),$ $j=1,\dots,\nu_2,$ as $\Z[\sigma^{\pm 1}]$-module.
\end{teo}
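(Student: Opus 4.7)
The plan is to translate the group-theoretic condition from Theorem \ref{teo:suZ} into the module-theoretic language of $\text{ab}(\kf)$. By Theorem \ref{teo:suZ}, a-monodromicity over $\Z$ is equivalent to $[F_n,F_n]=N[\kf,\kf]$. Since each $R_j$ is a commutator we have $N\subset[F_n,F_n]\subset\kf$, so both sides of this equality lie in $\kf$ and both contain $[\kf,\kf]$. Hence the equality holds in $F_n$ iff it holds after passing to the quotient $\kf/[\kf,\kf]=\text{ab}(\kf)$, i.e., iff the images of $[F_n,F_n]$ and of $N$ in $\text{ab}(\kf)$ coincide.

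Next I would compute each image separately. For $[F_n,F_n]$: use the semidirect product decomposition $F_n=\kf\rtimes\langle\beta_j\rangle$ for the generator $\beta_j$ fixed in the definition of $\sigma$. Passing to the quotient by $[\kf,\kf]$ gives $F_n/[\kf,\kf]\cong\text{ab}(\kf)\rtimes\Z$, with $\Z=\langle\sigma\rangle$ acting on $\text{ab}(\kf)$ as prescribed. A direct computation shows that in any semidirect product of an abelian group by $\Z$ the commutator subgroup equals $(1-\sigma)\text{ab}(\kf)$: commutators within $\text{ab}(\kf)$ or within $\Z$ vanish, while the mixed commutator $[\beta_j,s]$ equals $\sigma(s)-s$ for $s\in\text{ab}(\kf)$ (in additive notation). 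Since the commutator subgroup of a surjective image equals the image of the commutator subgroup, the image of $[F_n,F_n]$ in $\text{ab}(\kf)$ is exactly $(1-\sigma)\text{ab}(\kf)$.

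For the image of $N$, the crucial observation is that conjugation by any $\beta_i$ descends to the action $\sigma$ on $\text{ab}(\kf)$. Indeed $\beta_i=\beta_j\cdot(\beta_j^{-1}\beta_i)$ with $\beta_j^{-1}\beta_i\in\kf$, and conjugation by elements of $\kf$ becomes trivial on $\text{ab}(\kf)$. More generally, conjugation by an arbitrary $w\in F_n$ acts on $\text{ab}(\kf)$ as $\sigma^{\varphi(w)}$. Since $N$ is the normal closure of $\{R_j\}_{j=1}^{\nu_2}$ in $F_n$, it is generated by the elements $wR_jw^{-1}$, each of which projects to $\sigma^{\varphi(w)}\text{ab}(R_j)$. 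Hence the image of $N$ in $\text{ab}(\kf)$ is the $\Z[\sigma^{\pm 1}]$-submodule generated by $\text{ab}(R_1),\dots,\text{ab}(R_{\nu_2})$.

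Combining the two computations yields the claimed equivalence. I do not anticipate a major obstacle: the inclusion $\langle\text{ab}(R_j)\rangle_{\Z[\sigma^{\pm 1}]}\subset (1-\sigma)\text{ab}(\kf)$ is automatic from $R_j\in[F_n,F_n]$, so the nontrivial content of the theorem is that the $R_j$'s generate \emph{all} of $(1-\sigma)\text{ab}(\kf)$ as a $\Z[\sigma^{\pm 1}]$-module. The only step requiring care is the commutator computation in the semidirect product, which is routine.
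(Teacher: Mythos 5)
Your proposal is correct and follows essentially the same route as the paper, which states this theorem as a direct "translation" of Theorem \ref{teo:suZ} into the $\Z[\sigma^{\pm 1}]$-module language and offers no further proof; your argument supplies exactly the omitted details (the correspondence of subgroups between $[\kf,\kf]$ and $\kf$, the identification of the image of $[F_n,F_n]$ with $(1-\sigma)\text{ab}(\kf)$ via the semidirect product $F_n=\kf\rtimes\langle\beta_j\rangle$, and the identification of the image of $N$ with the $\Z[\sigma^{\pm 1}]$-span of the $\text{ab}(R_j)$). The only cosmetic slip is writing $\sigma^{\varphi(w)}$ where the exponent should be the integer length $l(w)$ rather than the element $\varphi(w)=t^{l(w)}$.
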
 \qed
\bigskip  

Of course, one can give a  conjecture holding over $\Z.$
\bigskip

 \noindent$\begin{array}{cl} \label{conj2}
{\bf  Conjecture\ 3}: &  {\mbox{\emph{ \ Assume that $\G$ is connected; then $\A$ is a-monodromic over $\Z.$   }}}
 \end{array}$

 \bigskip
 Conjecture $3$ clearly implies conjectures $1$ and $2.$
Our experiments agree with this stronger conjecture.


We give explicit computations for the arrangements in fig.\ref{fig1} and fig.\ref{fig5}.  The $\Z[\sigma^{\pm 1}]$-module\  $(1-\sigma)\text{ab}(\kf)$ is generated by $\{(1-\sigma) \Os_{0,i},\ , i\not= j\}.$  We choose  $j$ as the last index in the natural ordering. All abelianized relations are divisible by $(1-\sigma),$ so we just divide everything by $1-\sigma$ and verify that $\text{ab}(\kf)$ is generated by \ $\text{ab}(R_j)/(1-\sigma).$

For the arrangement in fig.\ref{fig1} we have to rewrite $13$ relations coming from $11$ double points and $1$ triple point. After abelianization we obtain:
\bigskip


\begin{tabular} {lll} (a)\ $\Os_{0,2}-\Os_{0,3};$ &(b)\ $\Os_{0,2}-\Os_{0,4};$ &(c)\ $\Os_{0,3}-\Os_{0,4};$ \\
 $(d)\ \Os_{0,1}-\Os_{0,4};$ &
 $(e)\ \Os_{0,1}-\Os_{0,3};$ & 
 $(f)\ \sigma\Os_{0,2}+\Os_{0,5};$ \\   
 $(g)\ (1+\sigma)\Os_{0,2}-\sigma\Os_{0,5};$ &
$(h)\ \Os_{0,1}+\sigma^{-1}(1-\sigma)\Os_{0,2}-\sigma^{-2}(1+\sigma)\Os_{0,5};$ & 
$(i)\ \Os_{0,3}+(\sigma^{-1}-1)\Os_{0,5};$ \\
 $(j)\ \Os_{0,4}+(\sigma^{-1}-1)\Os_{0,5};$ &  
$(k)\ \Os_{0,1}+(\sigma^{-1}-1)\Os_{0,2}-\sigma^{-1}\Os_{0,5};$ & 
$(l)\ \Os_{0,1}-\Os_{0,2};$\\ 
$(m)\ \Os_{0,3}-\Os_{0,5}$
\end{tabular}
\bigskip

The generator $\Os_{0,5}$ is obtained as\  $\sigma(\ (i) - (m)\ ).$  From $\Os_{0,5}$ we obtain in sequence all the other generators $\Os_{0,3}, \ \Os_{0,1}, \ \Os_{0,4},\ \Os_{0,2}.$  According to theorem \ref{teo:congliesse} 
this gives the a-monodromicity of the arrangement in fig.\ref{fig1}.

For the arrangement $A3\ deconed$ in fig.\ref{fig5} we have to rewrite two relations for each triple point and one relation for each double point. Their abelianization is given by:
\bigskip

\begin{tabular}{ll} (a) $\Os_{0,2}-\Os_{0,3};$ & 
(b) $\sigma\Os_{0,2}+\Os_{0,4};$ \\
(c) $(\sigma+1)\Os_{0,2}-\sigma \Os_{0,4};$&
(d) $\sigma \Os_{0,1}+(1-\sigma)\Os_{0,2}+(\sigma^{-1})\Os_{0,3}+(\sigma^{-2}-1)\Os_{0,4};$ \\
(e) $\Os_{0,1}+(\sigma^{-1}-1)\Os_{0,2}-(\sigma^{-1})\Os_{0,4}$ &
(f) $(\sigma+1) \Os_{0,1}+(\sigma^{-1}-\sigma)\Os_{0,2}-\Os_{0,3}+(\sigma^{-2}-\sigma^{-1})\Os_{0,4}$ 
\end{tabular}
\bigskip

We perform the following base changes: 
\bigskip

\begin{tabular}{ll} 
(a') = (a);& (b')=(b) - $\sigma$ (a);\\
 (c')= (c) - (b) - (a) &
(d')= (d) - $\sigma^{-2} $(b) + $\sigma^{-1}$ (a) - $\sigma$ (e)\\
  (e') = (e) + $\sigma^{-1}$ (b) - $\sigma^{-1}$ (a) &
(f') = (f) - ($\sigma^{-2}$+$\sigma^{-1}+1$) (b) + $\sigma$ (a) + $\sigma^{-1}$ (c) - $(\sigma+1)$ (e)
\end{tabular}
\bigskip

and
\bigskip

\begin{tabular}{lll} 
$\Os_{0,1}'\ =\ \Os_{0,1} + \sigma^{-1}\ \Os_{0,3}$; && $\Os_{0,2}'\ =\ \Os_{0,2}\ -\ \Os_{0,3}$; \\
$\Os_{0,3}'\ =\  \Os_{0,3}$; && $\Os_{0,4}'\ =\ \Os_{0,4}\ +\ \sigma\ \Os_{0,3}$
\end{tabular}
\bigskip

\noindent It is straighforward to verify, after these changes, that the submodule $M$ generated by 
$<ab(R_j):\ j=1,\dots,6>$ \  equals  
$$<\Os_{0,1}',\Os_{0,2}', (1+\sigma+\sigma^2)\Os_{0,3}',\Os_{0,4}'>.$$
So $M\subsetneq (1-\sigma)\text{ab}(\kf),$ in accordance with theorem \ref{teo:congliesse}.

\section{Further characterizations}

In this section we give a more intrinsic picture.

Let  $\AT=\{H_0,H_1,\dots,H_n\}$ be the conified arrangement in $\C^3.$  
The fundamental group
$${\mathbf G}=\pi_1(\MT)\quad (=\pi_1(\MA)\times\Z)$$
is generated by elementary loops $\beta_0,\dots,\beta_n$ around the hyperplanes.

Let 
$$\mathbf{F=F_{n+1}}[\beta_0,\dots, \beta_n]$$
be the free group and
${\mathbf N}$
be the normal subgroup generated by the relations, so we have a presentation

$$1\stackrel{}{\too} \bN \stackrel{}{\too} \bF \stackrel{\pi}{\too} \bG \stackrel{}{\too} 1$$

The length map $\varphi:\bF\to <t>\cong\Z$ factors through $\pi$ by a map
$$\psi:\bG\to \Z.$$

\ni Next, $\psi$ \ factorizes through the abelianization  

$$\frac{\bG}{[\bG,\bG]}\cong H_1(\MT;\Z)\cong \Z^{n+1}\cong\frac{\bF}{[\bF,\bF]} .$$

Let now 
$$\bK=\ ker{\psi}$$ 
so we have

\begin{equation}\label{milnor0}1\stackrel{}{\too} \bK \stackrel{}{\too} \bG \stackrel{\psi}{\too} \Z \stackrel{}{\too} 1\end{equation}
\medskip

\noindent and $\psi$ factorizes through  
$$\bG\stackrel{ab}{\too} \frac{\bG}{[\bG,\bG]}\cong\Z^{n+1} \stackrel{\lambda}{\too} \Z$$
\bs

We have  a commutative diagram:
\bs

\begin{center}
\begin{equation}\label{diagram1}
\begin{tabular}{c}
\xymatrix @R=2.5pc @C=2.5pc {
 & & 1 \ar[d] & & \\
 & & \bN \ar[d] & &1 \\
1\ar[r] & ker(\varphi) \ar[r] &\bF  \ar[r]^(.4){\varphi} \ar[d]_(.4){\pi} & \Z \ar[r]\ar[ru] &1 \\
1\ar[r] & [\bG,\bG] \ar[r] \ar[d] &\bG  \ar[r]^(.4){ab} \ar[ru]^(.45){\psi} \ar[d]& \frac{\bG}{[\bG,\bG]} \ar[r] \ar[u]_(.6){\lambda} &1 \\
 & \bK \ar[ru] &  1  & & \\
 1 \ar[ru] & & & & 
 } 
\end{tabular}
\end{equation}
\end{center}

\bs

\begin{rmk} One has
$$ker(\lambda)\ =\ \frac{\bK}{[\bG,\bG]}$$
so 
$$\frac{\bK}{[\bG,\bG]}\  \cong \Z^n$$
\end{rmk}
\bs

\ni Therefore diagram (\ref{diagram1}) extends to

\begin{center}
\begin{equation}\label{diagram2}
\begin{tabular}{c}
\xymatrix @R=2.5pc @C=2.5pc {
 & & 1 \ar[d] & & \\
 & & \bN \ar[d] & 1 & 1 \\
1\ar[r] & ker(\varphi) \ar[r] &\bF  \ar[r]^(.4){\varphi} \ar[d]_(.4){\pi} & \Z \ar[r] \ar[u] \ar[ru]&1 \\
1\ar[r] & [\bG,\bG] \ar[r] \ar[d] &\bG  \ar[r]^(.4){ab} \ar[ru]^(.45){\psi} \ar[d]& \frac{\bG}{[\bG,\bG]} \ar[r] \ar[u]_(.6){\lambda} &1 \\
 & \bK \ar[ru] &  1  & \frac{\bK}{[\bG,\bG]} \ar[u] & \\
 1 \ar[ru] & & & 1 \ar[u] & 
 } 
\end{tabular}
\end{equation}
\end{center}
\vskip1.5cm

Recall the $\Z[t^{\pm 1}]-$module isomorphism:

\begin{equation}\label{isomorf1} H_1(\bG;\Z[t^{\pm1}])\ \cong\  H_1(F;\Z)\end{equation}
\medskip

\ni where $F$ is the Milnor fibre, and (by Shapiro Lemma):

\begin{equation}\label{isomorf2}  H_1(F;\Z) \cong H_1(\bK;\Z) = \frac{\bK}{[\bK,\bK]}\end{equation}
\medskip

\begin{rmk}
There is an exact sequence
\bs

\begin{equation}\label{milnor1}1\stackrel{}{\too} \frac{[\bG,\bG]}{[\bK,\bK]} \stackrel{}{\too} \frac{\bK}{[\bK,\bK]} \stackrel{}{\too} \frac{\bK}{[\bG,\bG]}\cong\Z^n \stackrel{}{\too} 1
\end{equation}

\end{rmk}
\bs

From the definition before thm. \ref{teo:suZ} one has
\bs

\begin{lem}
The arrangement $\AT$ is a-monodromic over $\Z$ iff 
\bs

$$H_1(F;\Z)\cong \Z^n$$
\end{lem}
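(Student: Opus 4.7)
The plan is to translate $H_1(F;\Z)\cong \Z^n$ into the group-theoretic identity $[\bG,\bG]=[\bK,\bK]$, and then recognize that identity as the image under $\pi:\bF\to\bG$ of the a-monodromicity condition of Theorem \ref{teo:suZ}. First I combine the isomorphisms (\ref{isomorf1}) and (\ref{isomorf2}) to get $H_1(F;\Z)\cong \bK/[\bK,\bK]$, and then I apply the short exact sequence (\ref{milnor1})
\begin{equation*}
1 \longrightarrow \frac{[\bG,\bG]}{[\bK,\bK]} \longrightarrow \frac{\bK}{[\bK,\bK]} \longrightarrow \frac{\bK}{[\bG,\bG]} \longrightarrow 1.
\end{equation*}
The quotient $\bK/[\bG,\bG]$ is free abelian of rank $n$ by the remark preceding (\ref{milnor1}), so the sequence splits and $\bK/[\bK,\bK]\cong [\bG,\bG]/[\bK,\bK]\oplus \Z^n$. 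Hence $H_1(F;\Z)\cong\Z^n$ if and only if the left-hand summand vanishes, i.e.\ $[\bG,\bG]=[\bK,\bK]$.

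For the remaining equivalence I apply Theorem \ref{teo:suZ} to the conified arrangement $\AT$: a-monodromicity over $\Z$ is equivalent to $[\bF,\bF]=\bN\cdot[\kf,\kf]$ in $\bF$. Pushing forward along the surjection $\pi:\bF\to\bG$, whose kernel is $\bN$ and which sends $\kf$ onto $\bK$, one gets $\pi([\bF,\bF])=[\bG,\bG]$ and $\pi(\bN\cdot[\kf,\kf])=[\bK,\bK]$, yielding the forward implication immediately. For the converse, any $[\alpha,\beta]\in[\bF,\bF]$ has $\pi([\alpha,\beta])\in[\bG,\bG]=[\bK,\bK]$, so there is $\gamma\in[\kf,\kf]$ with $\pi([\alpha,\beta]\gamma^{-1})=1$, whence $[\alpha,\beta]\gamma^{-1}\in\bN$ and $[\alpha,\beta]\in\bN\cdot[\kf,\kf]$. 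Combining the two equivalences proves the lemma.

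The argument is essentially formal once diagram (\ref{diagram2}) and Theorem \ref{teo:suZ} are in place. The only non-routine point is the splitting in the first step, which rests on the torsion-freeness of $\bK/[\bG,\bG]$; the obstruction group $[\bG,\bG]/[\bK,\bK]$ is precisely the group $H$ introduced in the introduction as the object measuring both the extra free rank and the torsion of $H_1(F;\Z)$ beyond the expected $\Z^n$.
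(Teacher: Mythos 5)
Your proof is correct, but it takes a genuinely different (and more roundabout) route than the paper. The paper treats this lemma as an immediate unwinding of the definition: a-monodromicity of $\AT$ over $\Z$ means $H_1(\MT;\Z[t^{\pm1}])\cong\bigl(\Z[t^{\pm1}]/(t-1)\bigr)^{n}$, which via the identification $H_1(\MT;\Z[t^{\pm1}])\cong H_1(F;\Z)$ is $\Z^n$ with trivial $t$-action; conversely $H_1(F;\Z)\cong\Z^n$ already forces the monodromy to be trivial, since the coinvariants have rank $n$ and a surjective endomorphism of $\Z^n$ is injective. You instead compose two equivalences: first, $H_1(F;\Z)\cong\Z^n$ iff $[\bG,\bG]=[\bK,\bK]$ via the splitting of the sequence (\ref{milnor1}) — which is precisely the content and proof method of the \emph{theorem that follows} this lemma in the paper — and second, a-monodromicity over $\Z$ iff $[\bF,\bF]=\bN[\kf,\kf]$ iff $[\bG,\bG]=[\bK,\bK]$, where the last step is the pushforward isomorphism $[\bG,\bG]/[\bK,\bK]\cong[\bF,\bF]/\bN[\kf,\kf]$ that the paper only records at the end of the section (for the affine case). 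So you effectively prove the lemma and the subsequent theorem simultaneously, reversing the paper's logical order; this is non-circular because Theorem \ref{teo:suZ} is established independently by Fox calculus. Two small caveats: you are invoking Theorem \ref{teo:suZ} for the conified arrangement $\AT$ although it is stated for the affine $\A$ — this is harmless (the relations of $\bG$ are still commutators and the same Blanchfield argument applies) but worth flagging; and in the converse of your pushforward step you verify membership only for single commutators $[\alpha,\beta]$, which suffices because $\bN[\kf,\kf]$ is a subgroup (both factors being normal in $\bF$), a point you should state. What your route buys is that the group $[\bG,\bG]/[\bK,\bK]$ appears as the obstruction from the outset; what the paper's route buys is a one-line lemma whose proof does not depend on the presentation of $\bG$ at all.
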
 

\bs

\noindent It follows
\bs

\begin{teo}
The arrangement $\AT$ is a-monodromic over $\Z$ iff 
\begin{equation}\label{condition1}\frac{[\bG,\bG]}{[\bK,\bK]}\ =\ 0 \end{equation}
\end{teo}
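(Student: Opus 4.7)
The plan is to extract the result directly from the short exact sequence (\ref{milnor1})
$$1 \too \frac{[\bG,\bG]}{[\bK,\bK]} \too \frac{\bK}{[\bK,\bK]} \too \frac{\bK}{[\bG,\bG]} \too 1$$
combined with the identifications $H_1(F;\Z) \cong \bK/[\bK,\bK]$ from (\ref{isomorf2}) and $\bK/[\bG,\bG]\cong \Z^n$ from the remark before (\ref{milnor1}). The preceding lemma reduces the claim to showing that $[\bG,\bG]/[\bK,\bK]=0$ is equivalent to $H_1(F;\Z)\cong \Z^n$.

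For the $(\Leftarrow)$ direction, suppose $[\bG,\bG]/[\bK,\bK]=0$. Then the map $\bK/[\bK,\bK]\to \bK/[\bG,\bG]$ in (\ref{milnor1}) is an isomorphism, so $H_1(F;\Z)\cong \Z^n$, and the preceding lemma gives a-monodromicity over $\Z$.

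For the $(\Rightarrow)$ direction, assume $\AT$ is a-monodromic over $\Z$, so by the lemma $H_1(F;\Z)\cong \Z^n$. Then (\ref{milnor1}) reads
$$0\too \frac{[\bG,\bG]}{[\bK,\bK]} \too \Z^n \too \Z^n \too 0.$$
Since $\Z^n$ is free abelian of rank $n$, any subgroup is torsion-free, and the ranks in a short exact sequence of finitely generated abelian groups are additive. Hence $\mathrm{rank}([\bG,\bG]/[\bK,\bK])=0$, and being a subgroup of the torsion-free group $\Z^n$ it must vanish.

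No step here is genuinely difficult once the diagram (\ref{diagram2}) and the isomorphisms (\ref{isomorf1}), (\ref{isomorf2}) are in place; the only point that needs a moment's care is the rank argument in the $(\Rightarrow)$ direction, which depends on the fact that $\bK/[\bG,\bG]$ has already been identified with $\Z^n$ independently of any a-monodromicity hypothesis, so the rank of the kernel is forced to be zero.
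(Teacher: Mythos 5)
Your proposal is correct and follows essentially the same route as the paper: both extract the statement from the exact sequence (\ref{milnor1}) together with the identifications $H_1(F;\Z)\cong \bK/[\bK,\bK]$ and $\bK/[\bG,\bG]\cong\Z^n$. The only cosmetic difference is in the forward direction, where the paper invokes the fact that a surjective endomorphism of a finitely generated free abelian group is an isomorphism, while you argue via rank additivity and torsion-freeness of the kernel --- these are interchangeable.
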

\proof It immediately follows from sequence \ref{milnor1} and from the property that a surjective endomorphism of a finitely generated free abelian group is an isomorphism.
\qed
\bs

Since
$$\bK\supset [\bG,\bG]$$
it follows immediately (see \ref{perfect})
\bigskip

\begin{cor}
Assume 

$$\bG^{(1)}=[\bG,\bG]\ =\  \bG^{(2)}=[[\bG,\bG],[\bG,\bG]]$$

Then the arrangement $\AT$ is a-monodromic.
\end{cor}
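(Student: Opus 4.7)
The plan is to reduce the corollary directly to condition (\ref{condition1}) of the preceding theorem, namely $[\bG,\bG]/[\bK,\bK]=0$, by exploiting the chain of containments among the derived series of $\bG$ and the commutator subgroup of $\bK$.

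First I would observe that $\psi:\bG\to\Z$ factors through the abelianization $\bG/[\bG,\bG]$, so $[\bG,\bG]\subseteq\ker\psi=\bK$. Taking commutators of this inclusion gives $[[\bG,\bG],[\bG,\bG]]\subseteq [\bK,\bK]$, i.e.\ $\bG^{(2)}\subseteq [\bK,\bK]$.

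Next I would invoke the hypothesis $\bG^{(1)}=\bG^{(2)}$ to conclude
\[
[\bG,\bG]\ =\ \bG^{(1)}\ =\ \bG^{(2)}\ \subseteq\ [\bK,\bK],
\]
so the quotient $[\bG,\bG]/[\bK,\bK]$ vanishes. By the theorem just proved, this is exactly the condition that $\AT$ be a-monodromic over $\Z$, and a-monodromicity over $\Z$ implies a-monodromicity (over $\Q$) via the isomorphisms (\ref{isomorf1})--(\ref{isomorf2}), since $H_1(F;\Z)\cong\Z^n$ forces $H_1(F;\Q)\cong\Q^n$.

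There is no real obstacle here: the argument is purely a comparison of subgroups in $\bG$, and once the containment $[\bG,\bG]\subseteq\bK$ (already implicit in diagram (\ref{diagram2})) is spelled out, the hypothesis on the derived series does all the work. The only thing to check carefully is the direction of the inclusion $\bG^{(2)}\subseteq[\bK,\bK]$, which follows from $[\bG,\bG]\subseteq\bK$ by functoriality of the commutator.
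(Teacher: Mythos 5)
Your argument is correct and is essentially the paper's own: the paper likewise deduces the corollary from the containment $\bK\supset[\bG,\bG]$, which gives $[\bK,\bK]\supset\bG^{(2)}=\bG^{(1)}=[\bG,\bG]$ and hence the vanishing of $[\bG,\bG]/[\bK,\bK]$ required by the preceding theorem. Your extra remark that a-monodromicity over $\Z$ implies it over $\Q$ is a harmless elaboration of what the paper leaves implicit.
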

\bigskip

We also have:  
\bigskip

\begin{cor}\label{cor:central} Let $\bG$ have a central element of length $1.$ 
Then the arrangement $\AT$ is a-monodromic.
\end{cor}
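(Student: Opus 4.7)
The plan is to reduce this corollary directly to the preceding theorem, which asserts that $\AT$ is a-monodromic over $\Z$ iff $[\bG,\bG]/[\bK,\bK] = 0$. So the only thing to verify is the equality $[\bG,\bG] = [\bK,\bK]$ under the hypothesis that $\bG$ contains a central element of length $1$.

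First I would fix a central element $z \in \bG$ with $\psi(z) = 1$. The key observation is that centrality of $z$ combined with $\psi(z) = 1$ forces a direct product decomposition $\bG \cong \bK \times \langle z \rangle \cong \bK \times \Z$. Indeed, every $g \in \bG$ can be uniquely written as $g = (g\, z^{-\psi(g)})\cdot z^{\psi(g)}$, where the first factor lies in $\bK$ by construction; uniqueness and the fact that this assignment is a homomorphism both follow from $z$ being central (so conjugation by $z^n$ is trivial and the multiplication on $\bK \times \Z$ is the ordinary direct product).

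Once this splitting is established, the commutator subgroup computation is automatic: in a direct product $\bK \times \Z$ with abelian second factor, one has
\begin{equation*}
[\bG,\bG] \;=\; [\bK \times \Z,\, \bK \times \Z] \;=\; [\bK,\bK],
\end{equation*}
since commutators involving the central $\Z$-factor are trivial. Therefore $[\bG,\bG]/[\bK,\bK] = 0$, and the preceding theorem yields a-monodromicity of $\AT$ over $\Z$, which in particular implies a-monodromicity (over $\Q$).

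I do not anticipate a serious obstacle here: the entire content is extracting the direct product structure from the centrality plus the length condition, after which the commutator identity is purely formal. The only subtle point worth checking carefully is that $\psi(z) = 1$ (rather than just $\psi(z)$ nonzero) is what ensures that $\langle z \rangle$ projects isomorphically onto $\Z$ under $\psi$; without this, one would only get a finite-index subgroup and the argument would need to be adjusted (though the conclusion would still plausibly hold after passing to appropriate rational coefficients).
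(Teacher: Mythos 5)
Your proposal is correct and follows essentially the same route as the paper: the paper also deduces the splitting $\bG\cong\bK\times\Z$ from the exact sequence $1\to\bK\to\bG\to\Z\to1$ using the central element of length $1$, and then concludes $[\bG,\bG]=[\bK,\bK]$. You merely spell out the splitting map more explicitly than the paper does.
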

\proof Let $\gamma\in\bG$ be a central element of length $1.$ From sequence (\ref{milnor0}) the group splits as a direct product
$$\bG\cong \bK\times \Z$$
where $\Z=<\gamma>.$ Therefore clearly  $[\bG,\bG]=[\bK,\bK].$ \qed 
\bs  

\noindent An example of corollary is when one of the generators $\beta_j$ commutes with all the others, i.e. one hyperplane is transversal to the others. So, we re-find in this way a well-known fact.

Consider again the exact sequence (\ref{milnor1}). Remind that
the arrangement $\AT$ is a-monodromic (over $\Q$) iff \ $H_1(F;\Q)\cong \Q^n.$
By tensoring sequence (\ref{milnor1}) by $\Q$ we obtain

\begin{teo}
The arrangement $\AT$ is a-monodromic (over $\Q$) iff 
$$\frac{[\bG,\bG]}{[\bK,\bK]}\otimes\Q =\ 0 $$
\end{teo}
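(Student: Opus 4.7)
The plan is to mimic exactly the proof of the $\Z$-version just obtained, but with $\Q$-coefficients, exploiting the flatness of $\Q$ as a $\Z$-module. The central object is still the short exact sequence (\ref{milnor1})
$$1\longrightarrow \frac{[\bG,\bG]}{[\bK,\bK]} \longrightarrow \frac{\bK}{[\bK,\bK]} \longrightarrow \frac{\bK}{[\bG,\bG]} \longrightarrow 1,$$
together with the identifications $\bK/[\bK,\bK]\cong H_1(\bK;\Z)\cong H_1(F;\Z)$ coming from (\ref{isomorf1})--(\ref{isomorf2}), and $\bK/[\bG,\bG]\cong \Z^n$.

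First I would tensor (\ref{milnor1}) with $\Q$ over $\Z$. Since $\Q$ is flat, exactness is preserved, yielding the short exact sequence of $\Q$-vector spaces
$$0\longrightarrow \frac{[\bG,\bG]}{[\bK,\bK]}\otimes\Q \longrightarrow \frac{\bK}{[\bK,\bK]}\otimes\Q \longrightarrow \Q^n \longrightarrow 0.$$
By the universal coefficient theorem (or directly), the middle term equals $H_1(F;\Z)\otimes\Q\cong H_1(F;\Q)$. By definition of a-monodromicity over $\Q$ (see the remark following Definition \ref{amonodromy}), the arrangement $\AT$ is a-monodromic iff $H_1(F;\Q)\cong\Q^n$.

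The rest is a dimension count: since the right-hand term has $\Q$-dimension $n$, additivity of dimensions in the exact sequence gives
$$\dim_\Q H_1(F;\Q)\ =\ n\ +\ \dim_\Q\left(\frac{[\bG,\bG]}{[\bK,\bK]}\otimes\Q\right).$$
Hence $\dim_\Q H_1(F;\Q)=n$ iff $\frac{[\bG,\bG]}{[\bK,\bK]}\otimes\Q=0$, which is the claim.

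I do not anticipate any real obstacle; all ingredients (the sequence \ref{milnor1}, the isomorphisms \ref{isomorf1}--\ref{isomorf2}, and the computation of $\bK/[\bG,\bG]$ from diagram (\ref{diagram2})) are already in place, and the only new input is the flatness of $\Q$ over $\Z$ to preserve exactness after tensoring. The statement is essentially the rational shadow of the previous $\Z$-theorem, with the subtlety that over $\Q$ one only needs the obstruction group to be torsion rather than zero.
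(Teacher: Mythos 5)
Your proof is correct and follows exactly the paper's route: the paper likewise obtains the statement by tensoring the exact sequence (\ref{milnor1}) with $\Q$ and using that a-monodromicity of $\AT$ over $\Q$ means $H_1(F;\Q)\cong\Q^n$. Your write-up merely makes explicit the details (flatness of $\Q$, identification of the middle term with $H_1(F;\Q)$, and the dimension count) that the paper leaves implicit.
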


\begin{rmk} All remarkable questions about the $H_1$ of the  Milnor fibre $F$ are actually questions about the group 
$$\frac{[\bG,\bG]}{[\bK,\bK]}$$
In particular:
\medskip

\ni 
\end{rmk}

\begin{enumerate}
\item $H_1(F;\Z)$ has torsion iff \ $\frac{[\bG,\bG]}{[\bK,\bK]}$ \ has torsion.
\item $$b_1(F)\ = \ n\ + rk\left(\frac{[\bG,\bG]}{[\bK,\bK]}\right)$$

\end{enumerate}
\bigskip

\noindent (There are only complicated examples with torsion in higher homology of the Milnor fiber, recently found in \cite{denhamsuciu}).
\bigskip

\begin{cor}
One has
\bs

$$n \leq b_1(F) \leq n+rk(\frac{[\bG,\bG]}{[[\bG,\bG],[\bG,\bG]]})\ =\ n\ +\ rk(\frac{\bG^{(1)}}{\bG^{(2)}})$$
\end{cor}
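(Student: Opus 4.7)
The lower bound is immediate from item (2) of the preceding remark: since $b_1(F)=n+\mathrm{rk}\!\left(\tfrac{[\bG,\bG]}{[\bK,\bK]}\right)$ and the rank is a nonnegative integer, one gets $b_1(F)\geq n$.

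For the upper bound, the plan is to exhibit a surjection of finitely generated abelian groups
$$\frac{[\bG,\bG]}{[[\bG,\bG],[\bG,\bG]]}\ \twoheadrightarrow\ \frac{[\bG,\bG]}{[\bK,\bK]}$$
from which the inequality on ranks is immediate (a surjection between finitely generated abelian groups cannot increase the rank of the source).

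To produce this surjection, I would use the inclusion $\bK\supset [\bG,\bG]$, which holds because $\bG/\bK\cong\Z$ is abelian and hence factors through the abelianization. Taking commutators preserves inclusions, so
$$[\bK,\bK]\ \supset\ [[\bG,\bG],[\bG,\bG]]\ =\ \bG^{(2)}.$$
Thus $\bG^{(2)}$ is contained in $[\bK,\bK]$, which is a subgroup of $[\bG,\bG]=\bG^{(1)}$; the quotient map gives precisely the desired surjection $\bG^{(1)}/\bG^{(2)}\twoheadrightarrow [\bG,\bG]/[\bK,\bK]$. Combined with the formula $b_1(F)=n+\mathrm{rk}([\bG,\bG]/[\bK,\bK])$, this yields
$$b_1(F)\ \leq\ n+\mathrm{rk}\!\left(\frac{\bG^{(1)}}{\bG^{(2)}}\right),$$
finishing the proof. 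There is no real obstacle here; the statement is essentially a formal consequence of the exact sequence (\ref{milnor1}) together with the elementary inclusion $[\bK,\bK]\supset \bG^{(2)}$.
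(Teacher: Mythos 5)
Your proof is correct and follows the same route the paper intends: the paper leaves the corollary unproved precisely because it is an immediate consequence of the formula $b_1(F)=n+rk\bigl([\bG,\bG]/[\bK,\bK]\bigr)$ together with the inclusion $\bK\supset[\bG,\bG]$ (already noted just before), which gives $[\bK,\bK]\supset\bG^{(2)}$ and hence the surjection $\bG^{(1)}/\bG^{(2)}\twoheadrightarrow[\bG,\bG]/[\bK,\bK]$. Nothing is missing.
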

\bs

Now we consider again the affine arrangement $\A.$ Denoting by  $\bG':=\pi_1(\MA),$ we have
$$\bG\cong \bG'\times\Z$$
where the factor $\Z$ is generated by a loop around all the hyperplanes in $\AT.$ As already said,   it follows by the Kunneth formula that if $\A$ has trivial monodromy over $\Z$ (resp. $\Q$) then $\AT$ does. 
Conversely, in  fig.\ref{completetriangle} we have an example where $\AT$ is a-monodromic but $\A$ has non-trivial monodromy.   

The a-monodromicity of $\A$ (over $\Z$) is equivalent to  
\begin{equation}\label{nomonodromy}H_1(\MA;R)\cong \left(\frac{R}{(t-1)}\right)^{n-1}\end{equation}
($R=\Z[q^{\pm 1}]$).
By considering a sequence as in (\ref{milnor0})
\begin{equation}\label{milnor0p}1\stackrel{}{\too} \bK' \stackrel{}{\too} \bG' \stackrel{\psi}{\too} \Z \stackrel{}{\too} 1\end{equation}
we can repeat the above arguments: in particular condition (\ref{nomonodromy}) is equivalent to 
$$H_1(\bK';\Z)=\frac{\bK'}{[\bK',\bK']}= \Z^{n-1}$$
and we get an exact sequence like in (\ref{milnor1}) for $\bK'$ and $\bG'.$ 
So we obtain 
\bigskip

\begin{teo}\label{amonoaff} The arrangement $\A$ is a-monodromic over $\Z$ (resp. over $\Q$) iff 
$$\frac{[\bG',\bG']}{[\bK',\bK']}\ =\ 0 \mbox{ (resp. $\frac{[\bG',\bG']}{[\bK',\bK']}\otimes\Q =\ 0$).} $$
\end{teo}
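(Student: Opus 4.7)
The plan is to replicate, verbatim, the argument already carried out for $\AT$ in the preceding theorem, replacing $\bG$ by $\bG'$ and $\bK$ by $\bK'$ throughout. First I would set up the analogous diagram to (\ref{diagram2}) starting from the short exact sequence (\ref{milnor0p}). The abelianization $\bG'/[\bG',\bG']\cong H_1(\MA;\Z)\cong\Z^n$ surjects onto $\Z$ via the length map $\lambda$ (which sends each standard generator $\beta_i$ to $1$), so $\bK'/[\bG',\bG']=\ker(\lambda)\cong\Z^{n-1}$. Applying the functor "abelianization" to (\ref{milnor0p}) and using the standard snake-lemma argument, I get the exact sequence analogous to (\ref{milnor1}):
$$1\too \frac{[\bG',\bG']}{[\bK',\bK']}\too \frac{\bK'}{[\bK',\bK']}\too \frac{\bK'}{[\bG',\bG']}\cong\Z^{n-1}\too 1.$$

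Next I would invoke Shapiro's lemma together with the identification of isomorphisms (\ref{isomorf1})--(\ref{isomorf2}) (adapted to the affine setting) to obtain the $R$-module isomorphism $H_1(\MA;R_t)\cong \bK'/[\bK',\bK']$, where $t$ acts as conjugation by any chosen length-$1$ element $\beta\in\bG'$ (the sequence (\ref{milnor0p}) splits since $\Z$ is free). The definition of a-monodromicity over $\Z$ then translates into the requirement that $\bK'/[\bK',\bK']$ be free abelian of rank $n-1$ on which $t$ acts trivially.

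For the implication $\Leftarrow$, assume $[\bG',\bG']/[\bK',\bK']=0$. Then the exact sequence above collapses to an isomorphism $\bK'/[\bK',\bK']\cong\bK'/[\bG',\bG']\cong\Z^{n-1}$. Moreover the conjugation action of $\bG'$ on its own abelianization is tautologically trivial, hence the induced $t$-action on the subgroup $\bK'/[\bG',\bG']$ is trivial, giving a-monodromicity. Conversely, if $\A$ is a-monodromic over $\Z$, then $\bK'/[\bK',\bK']\cong\Z^{n-1}$, and the canonical surjection $\bK'/[\bK',\bK']\twoheadrightarrow \bK'/[\bG',\bG']\cong\Z^{n-1}$ is a surjective endomorphism of a finitely generated free abelian group, hence an isomorphism (exactly as in the proof of condition (\ref{condition1})). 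This forces $[\bG',\bG']/[\bK',\bK']=0$. The rational statement follows by tensoring the exact sequence with $\Q$ and applying the same reasoning over $\Q$.

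There is essentially no hard step: the argument is a mechanical transcription of the proof already given for $\AT$. The only points requiring a minute of care are (i) verifying that $\bK'/[\bG',\bG']$ really has the correct rank $n-1$ (one rank lower than in the conified case, because $\A$ has $n$ lines rather than $n+1$ hyperplanes), and (ii) noting that the $R$-module structure, not just the abelian group structure, comes out correctly -- which is automatic because the $t$-action on $\bK'/[\bG',\bG']$ is induced by conjugation inside $\bG'$ and becomes trivial after further quotienting into the abelianization.
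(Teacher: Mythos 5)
Your proposal is correct and follows exactly the route the paper takes: the paper's own proof of Theorem \ref{amonoaff} consists precisely of the remark that the sequence (\ref{milnor0p}) and the analogue of (\ref{milnor1}) for $\bK'$ and $\bG'$ let one repeat the argument given for $\AT$, with the rank dropping from $n$ to $n-1$. Your spelled-out version (exactness of the analogue of (\ref{milnor1}), the surjective-endomorphism-of-$\Z^{n-1}$ step, and tensoring with $\Q$ for the rational case) is just a more explicit transcription of the same argument.
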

\bs

By considering a presentation for $\bG'$ 
$$1\stackrel{}{\too} \bN' \stackrel{}{\too} \bF' \stackrel{\pi}{\too} \bG' \stackrel{}{\too} 1$$
where $\bF'$ is the group freely generated by $\beta_1,\dots,\beta_n,$ we have a diagram similar to (\ref{diagram2}) for $\bG'.$  From
$$\bN'\subset [\bF',\bF'] \subset \kf $$
we have isomorphisms

$$\frac{[\bG',\bG']}{[\bK',\bK']}\ \cong\ \frac{\pi^{-1}[\bG',\bG']}{\pi^{-1}[\bK',\bK']}\ \cong\ 
\frac{[\bF',\bF']}{\bN'[\kf,\kf]}$$
which gives again theorem \ref{teo:suZ}.
\bigskip

Corollary \ref{cor:central} extends clearly to the affine case: therefore, if one line of $\A$ is in general position with respect to the others, then $\A$ is a-monodromic. 

This result has the following  useful generalization, which has both a central and an affine versions. We give here the affine one.

\begin{teo} \label{teo:direct}  Assume that the fundamental group $G'$ decomposes as a direct product
$$G'\ =\ A\times B$$
of two subgroups, each one having at least one element of length one. Then $\A$ is a-monodromic.\\
In particular, this applies to the case when $G'$ decomposes as a direct product of free groups,
$$G'\ =\ F_{i_1}\times F_{i_2}\times \dots \times F_{i_k}$$
where (at least) two of them have an element of length one. 
\end{teo}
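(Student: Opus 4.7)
The plan is to invoke Theorem \ref{amonoaff}, which reduces a-monodromicity of $\A$ over $\Z$ to the vanishing of $[G',G']/[K',K']$; since the inclusion $[K',K']\subseteq[G',G']$ is automatic, it suffices to prove the reverse inclusion $[G',G']\subseteq[K',K']$, where $K'=\ker\psi$ is the length-zero subgroup.

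First I would exploit the direct product structure to reduce commutators in $G'$ to commutators inside each factor. Writing every $g\in G'$ uniquely as $g=g_A g_B$ with $g_A\in A$ and $g_B\in B$, and using that $A$ and $B$ commute element-wise, a short computation gives
$$[x,y]\ =\ [x_A,y_A]\cdot[x_B,y_B]\qquad\forall\,x,y\in G',$$
so $[G',G']=[A,A]\cdot[B,B]$. It then suffices to prove $[A,A]\subseteq[K',K']$ and, symmetrically, $[B,B]\subseteq[K',K']$.

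The core step is a normalization trick exploiting the length-one hypothesis. Let $b\in B$ satisfy $\psi(b)=1$. For arbitrary $x_A,y_A\in A$ with $m:=\psi(x_A)$ and $p:=\psi(y_A)$, set
$$u\ :=\ x_A\,b^{-m},\qquad v\ :=\ y_A\,b^{-p},$$
so that $\psi(u)=\psi(v)=0$, that is $u,v\in K'$. Since $b$ commutes with every element of $A$, the $b$-factors can be freely moved past $x_A,y_A$ and their inverses, and they cancel in $uvu^{-1}v^{-1}$, yielding $[u,v]=[x_A,y_A]$. Hence $[x_A,y_A]\in[K',K']$. The symmetric argument, using an element $a\in A$ with $\psi(a)=1$, disposes of $[B,B]$.

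For the corollary on direct products of free groups I would simply reduce to the two-factor case: pick an index $j$ such that $F_{i_j}$ contains a length-one element, set $A:=F_{i_j}$ and $B:=\prod_{k\ne j}F_{i_k}$; by hypothesis $B$ still contains a length-one element, inherited from another $F_{i_k}$, so the previous step applies verbatim. The only conceptual subtlety is the cancellation of the $b$-factors inside $uvu^{-1}v^{-1}$, which relies crucially on $[A,B]=1$; this is precisely where the direct-product hypothesis enters, and beyond this verification the argument is purely formal.
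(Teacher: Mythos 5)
Your proposal is correct and follows essentially the same route as the paper: reduce via Theorem \ref{amonoaff} to showing $[G',G']\subseteq[K',K']$, split commutators as $[x,y]=[x_A,y_A][x_B,y_B]$, and normalize each factor's entries to length zero by multiplying with powers of a length-one element from the \emph{other} factor, which commutes and hence cancels in the commutator. The paper's proof is this exact argument, including the reduction of the free-product case to the two-factor case.
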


\proof  First, remark that any commutator  $[ab,a'b']\in[G',G']$  equals $[a,a'][b,b'].$ Therefore it is sufficient  to show that  $[A,A]\subset [K',K'],$ and $[B,B]\subset[K',K'].$  

Let $a_0\in A,\ b_0\in B$ be elements of length one.  
Let $l=\psi(a), \ l'=\psi(a')$ be the lengths of $a$ and $a'$ respectively. Then
$$[a,a']\ =\ [a b_0^{-l},a'b_0^{-l'}]$$
and the second commutator lies in $[K',K']$ by construction. This proves that $[A,A]\subset [K',K'].$

In the same way, by using $a_0,$ we show that $[B,B]\subset [K',K'].$    \qed
\bigskip

\begin{rmk} This theorem includes the case when the arrangement is a disjoint union 
$\A=\A'\sqcup\A''$ of two subarrangements which intersect each other transversally. It is known that $\pi_1(\MA)$ is the direct product of $\pi_1(\M(\A'))$ with $\pi_1(\M(\A''))$ (see \cite{oka}) therefore by theorem \ref{teo:direct}  the arrangement $\A$ is a-monodromic.
This remark also seems new in the literature. 
\end{rmk}

We can use this result (or even corollary \ref{cor:central}) to prove the a-monodromicity of those examples in part 6 for which the fundamental group splits as a direcy product of free groups. 

Another example is given by any affine arrangement having only double points: in this case \ $\A=\cup_{i=1}^k \ \A_i$ \  where the $\A_i$'s are sets of parallel lines. Then $\pi_1(\A)=\times_{i=1}^k\ F_{n_i}$ where  $F_{n_i}$ is the free group in 
$n_i=|A_i |$ generators.  This gives an easy prove of the following known fact:  if there exists a line in a projective arrangement $\A$ which contains all the points of multiplicity $\geq 3,$ then $\A$ is a-monodromic.

To take care also of examples as that in fig.\ref{fig2}, where the fundamental group is not a direct product of free groups,   
let us introduce another class of graphs $\tilde{\G}$ as follows. Let the affine arrangement $\A$ have $n$ lines. Then: 
\begin{enumerate}
\item the vertex set of $\tilde{\G}$ corresponds to the set of generators $\{ \beta_i,\ i=1,\dots,n\}$ of $G';$  
\item for each edge $(\beta_i,\beta_j)$ of $\tilde{\G},$  the commutator  $[\beta_i,\beta_j]$ belongs to $[K',K'];$
\item $\tilde{\G}$   is connected.
\end{enumerate}
\medskip

\ni We call a graph $\tilde{\G}$ satisfying the previous conditions an \emph{admissible} graph. 

\bigskip

\begin{teo} \label{teo:admissible} If $\A$ allows an admissible graph  $\tilde{\G}$ then $\A$ is a-monodromic. \end{teo}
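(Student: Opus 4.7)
The plan is to apply Theorem~\ref{amonoaff}, which reduces a-monodromicity of $\A$ (over $\Z$, hence also over $\Q$) to the equality $[\bG',\bG']=[\bK',\bK']$ (the reverse inclusion is automatic, as every commutator has length zero). First I would observe that $[\bK',\bK']$ is normal in $\bG'$: it is characteristic in $\bK'$, and $\bK'$ is itself normal in $\bG'$ as the kernel of $\psi$. Consequently the quotient $[\bG',\bG']/[\bK',\bK']$ is generated, as an abelian group, by the images of the basic commutators $[\beta_i,\beta_j]$, and so it will suffice to prove that
\[ [\beta_i,\beta_j]\in[\bK',\bK'] \quad\text{for every pair } i,j. \]

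Next I would work in the abelian group $M:=\bK'/[\bK',\bK']$, which carries a natural $\Z[\sigma^{\pm1}]$-module structure: the cyclic group $\bG'/\bK'\cong\Z$ acts on $M$ by conjugation, and since $M$ is abelian any two length-one elements of $\bG'$ (differing by an element of $\bK'$) induce the same automorphism $\sigma$. For each ordered pair $(i,j)$ set $h_{ij}:=\overline{\beta_i\beta_j^{-1}}\in M$. The identity $\beta_i\beta_j^{-1}\cdot\beta_j\beta_k^{-1}=\beta_i\beta_k^{-1}$ in $\bG'$ yields at once the cocycle relation
\[ h_{ij}+h_{jk}=h_{ik}\quad\text{in } M. \]
Moreover, from the factorization
\[ [\beta_i,\beta_j]\ =\ \bigl(\beta_i(\beta_j\beta_i^{-1})\beta_i^{-1}\bigr)\cdot(\beta_i\beta_j^{-1}), \]
both factors of which lie in $\bK'$, one reads off that the image of $[\beta_i,\beta_j]$ in $M$ equals $\sigma(h_{ji})+h_{ij}=(1-\sigma)h_{ij}$. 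Hence $[\beta_i,\beta_j]\in[\bK',\bK']$ if and only if $(1-\sigma)h_{ij}=0$ in $M$.

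To finish I would propagate this vanishing along $\tilde{\G}$. Set $S:=\{(i,j):(1-\sigma)h_{ij}=0\}$. The cocycle relation implies the transitivity $(i,j),(j,k)\in S\ \Rightarrow\ (i,k)\in S$. By the admissibility hypothesis, $S$ contains every edge of $\tilde{\G}$; by the connectedness of $\tilde{\G}$, any two indices are joined by a path of edges, so $S$ contains every pair $(i,k)$. Therefore $[\beta_i,\beta_j]\in[\bK',\bK']$ for all $i,j$, proving the theorem.

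The one substantive point is the module-theoretic identification $[\beta_i,\beta_j]\equiv(1-\sigma)h_{ij}\pmod{[\bK',\bK']}$, which converts a non-abelian commutator condition into a linear equation in the $\Z[\sigma^{\pm1}]$-module $M$. Once this is set up the argument is essentially a path/cocycle calculation, closely analogous in spirit to Theorem~\ref{teo:direct}, and I do not anticipate any serious obstacle.
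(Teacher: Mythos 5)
Your proof is correct, and for the key step it takes a genuinely different route from the paper. Both arguments begin the same way: invoke Theorem~\ref{amonoaff} to reduce the claim to showing $[\beta_i,\beta_j]\in[K',K']$ for every pair $i,j$, and then propagate along a path in $\tilde{\G}$. But where the paper gets the propagation from Lemma~\ref{lem:ciclicprod} --- the statement that every cyclic product $[\beta_{i_0},\beta_{i_1}][\beta_{i_1},\beta_{i_2}]\cdots[\beta_{i_k},\beta_{i_0}]$ lies in $[\ker\varphi,\ker\varphi]$, which the authors prove by an appeal to Blanchfield's theorem for $k=3$ followed by an induction --- you instead linearize the whole problem in the $\Z[\sigma^{\pm1}]$-module $M=K'/[K',K']$ via the identity $[\beta_i,\beta_j]\equiv(1-\sigma)h_{ij}\pmod{[K',K']}$ with $h_{ij}=\overline{\beta_i\beta_j^{-1}}$, and the telescoping cocycle relation $h_{ij}+h_{jk}=h_{ik}$. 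Your computation in fact yields the paper's cyclic-product lemma as a one-line corollary (the image of the cyclic product in $M$ is $(1-\sigma)\sum h_{i_j i_{j+1}}=(1-\sigma)h_{i_0i_0}=0$), so your route is more self-contained: it replaces the black-box use of Blanchfield's theorem and the induction by an explicit, elementary identity, at the modest cost of setting up the module $M$ and checking that conjugation descends to a well-defined $\sigma$. The only cosmetic imprecision is the assertion that $[\bG',\bG']/[\bK',\bK']$ is generated as an abelian group by the images of the basic commutators: strictly, $[\bG',\bG']$ is the \emph{normal closure} of the $[\beta_i,\beta_j]$ in $\bG'$, so one should say that once each $[\beta_i,\beta_j]$ lies in the normal subgroup $[\bK',\bK']$, so does its normal closure; this is exactly the normality observation you already made, so the argument stands.
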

\bigskip

We need  the following lemma.
\bigskip

\begin{lem}\label{lem:ciclicprod} Let $F_n=F[\beta_1,\dots,\beta_n]$ be the free group in the generators $\beta_i$'s. Let $\varphi$ be the length function (see part 7) on $F_n.$  Then for any sequence of indices $i_0,\dots,i_k$ one has 
$$[\beta_{i_0},\beta_{i_1}][\beta_{i_1},\beta_{i_2}]\dots[\beta_{i_{k-1}},\beta_{i_k}][\beta_{i_k},\beta_{i_0}] \in [\ker(\varphi),\ker(\varphi)]$$
for each "closed" product of commutators. 
\end{lem}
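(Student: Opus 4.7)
My plan is to reduce the statement to a telescoping identity in the abelianization $\mathrm{ab}(\ker\varphi) = \ker\varphi/[\ker\varphi,\ker\varphi]$. The lemma says exactly that the cyclic product of commutators is trivial in this abelian quotient, so it suffices to compute the image of each commutator $[\beta_{i_a},\beta_{i_b}]$ there and see the cancellation.

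To do this, I would fix an auxiliary length-one element $\gamma\in F_n$ (for instance $\gamma=\beta_1$) and, for each index $i$, set $a_i:=\beta_i\gamma^{-1}\in\ker\varphi$, so that $\beta_i=a_i\gamma$. Substituting into the commutator and collapsing the $\gamma\gamma^{-1}$ in the middle gives
\begin{equation*}
[\beta_i,\beta_j]\;=\;a_i\,(\gamma a_j\gamma^{-1})\,(\gamma a_i^{-1}\gamma^{-1})\,a_j^{-1},
\end{equation*}
which is manifestly a word in $\ker\varphi$. Let $\sigma$ denote the automorphism of $\mathrm{ab}(\ker\varphi)$ induced by conjugation by $\gamma$ (this is the same $\sigma$ used earlier in section \ref{sec:free}, up to the choice of $\gamma$). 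Passing to the abelianization, written additively, the displayed expression becomes
\begin{equation*}
\overline{[\beta_i,\beta_j]}\;=\;\bar a_i+\sigma\bar a_j-\sigma\bar a_i-\bar a_j\;=\;(1-\sigma)(\bar a_i-\bar a_j).
\end{equation*}

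The final step is to sum this over the cyclic sequence. The image of the product $[\beta_{i_0},\beta_{i_1}]\cdots[\beta_{i_{k-1}},\beta_{i_k}][\beta_{i_k},\beta_{i_0}]$ in $\mathrm{ab}(\ker\varphi)$ is
\begin{equation*}
(1-\sigma)\Bigl[(\bar a_{i_0}-\bar a_{i_1})+(\bar a_{i_1}-\bar a_{i_2})+\cdots+(\bar a_{i_{k-1}}-\bar a_{i_k})+(\bar a_{i_k}-\bar a_{i_0})\Bigr]\;=\;0,
\end{equation*}
since the bracketed expression telescopes to zero. This shows the cyclic product lies in $[\ker\varphi,\ker\varphi]$, as desired. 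There is no real obstacle: the only subtle point is the bookkeeping in the first computation, where one must be careful that $\gamma$ has length exactly one so that each $a_i$ really lies in $\ker\varphi$ and conjugation by $\gamma$ gives a well-defined automorphism that induces $\sigma$ on the abelianization.
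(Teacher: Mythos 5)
Your proof is correct, and it takes a genuinely different route from the paper's. The paper argues by induction on $k$: it splits the cyclic product as a triangle $[\beta_{i_0},\beta_{i_1}][\beta_{i_1},\beta_{i_2}][\beta_{i_2},\beta_{i_0}]$ times a shorter cyclic product (the inserted factors $[\beta_{i_2},\beta_{i_0}][\beta_{i_0},\beta_{i_2}]$ cancelling), and disposes of the triangle case by invoking Blanchfield's theorem, i.e.\ by checking via Fox calculus that the relevant Fox derivatives vanish under $\varphi$. You instead work directly in $\mathrm{ab}(\ker\varphi)$: the substitution $\beta_i=a_i\gamma$ with $a_i=\beta_i\gamma^{-1}\in\ker\varphi$ exhibits each commutator as a word in $\ker\varphi$ whose class is $(1-\sigma)(\bar a_i-\bar a_j)$, and the cyclic sum telescopes. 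Your computation is right (the key identity $[\beta_i,\beta_j]=a_i(\gamma a_j\gamma^{-1})(\gamma a_i\gamma^{-1})^{-1}a_j^{-1}$ checks out, conjugation by $\gamma$ preserves the normal subgroup $\ker\varphi$ and so induces $\sigma$ on the abelianization, and the image of a product of elements of $\ker\varphi$ under the abelianization map is the sum of their images). What your approach buys is uniformity and self-containedness: it treats all $k$ at once with no induction and no appeal to Blanchfield's theorem --- in effect you re-derive exactly the piece of that theorem that is needed --- and it makes transparent \emph{why} the statement is true, namely that every commutator of generators maps into the image of $1-\sigma$ with a coefficient that telescopes around a cycle. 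The paper's inductive reduction is shorter on the page but outsources the triangle case to a cited black box (and is slightly sloppy about the base cases $k\le 2$, where the triangle already appears); yours replaces that with a two-line computation.
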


\ni{\it Proof of lemma.} If $k\leq 2$ the result is trivial. If $k=3,$ a straighforward application of  Blanchfield theorem ([4]) gives the result. For $k>3,$ we can write
$$[\beta_{i_0},\beta_{i_1}][\beta_{i_1},\beta_{i_2}]\dots[\beta_{i_{k-1}},\beta_{i_k}][\beta_{i_k},\beta_{i_0}]=
([\beta_{i_0},\beta_{i_1}][\beta_{i_1},\beta_{i_2}][\beta_{i_2},\beta_{i_0}])([\beta_{i_0},\beta_{i_2}]\dots[\beta_{i_{k-1}},\beta_{i_k}][\beta_{i_k},\beta_{i_0}])$$
and we conclude by induction on $k.$  \qed
\bigskip
\begin{rmk}\label{rmk:ciclicprod} Clearly,  lemma \ref{lem:ciclicprod} applied to the generators of $G'$ gives that
$$[\beta_{i_0},\beta_{i_1}][\beta_{i_1},\beta_{i_2}]\dots[\beta_{i_{k-1}},\beta_{i_k}][\beta_{i_k},\beta_{i_0}] \in [K',K']$$ 
for each closed product of commutators. 
\end{rmk}
\bigskip
 
\ni{\it Proof of theorem \ref{teo:admissible}.} \ According to theorem \ref{amonoaff}  what we have to prove is that any commutator $[\beta_i,\beta_j]$ belongs to $[K',K'].$  

If $i, j$ corresponds to an edge $(\beta_i,\beta_j)$ of $\tilde{\G},$ the result follows by definition. 
Otherwise, let $\beta_i=\beta_{i_0},\beta_{i_1},\dots,\beta_{i_k}=\beta_j$ be a path in $\tilde{\G}$ connecting $\beta_i$ with $\beta_j.$  By definition, $[\beta_{i_j},\beta_{i_{j+1}}]\in [K',K'],$ $j=0,\dots,k-1,$ so
$\prod_{j=0}^{k-1} [\beta_{i_j},\beta_{i_{j+1}}]  \ \in [K',K'].$   By lemma \ref{lem:ciclicprod} and remark \ref{rmk:ciclicprod} 
$$[\beta_{i_0},\beta_{i_1}][\beta_{i_1},\beta_{i_2}]\dots[\beta_{i_{k-1}},\beta_{i_k}][\beta_{i_k},\beta_{i_0}] \in [K',K'].$$ 
It follows that $[\beta_{i_0},\beta_{i_k}]=[\beta_{i},\beta_{j}]\in [K',K'],$ which gives the thesis. \qed
\bigskip

We can use theorem \ref{teo:admissible} to prove conjecture (1) under further hypotheses.

\begin{cor}\label{cor:admissible} Let $\A$ be an affine arrangement and let $\G$ be its associated graph of double points.  Assume that  $\G$ contains an admissible spanning tree $\tilde{\Gamma}.$ Then $\A$ is a-monodromic. \end{cor}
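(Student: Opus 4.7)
The plan is to recognize this corollary as an immediate consequence of Theorem \ref{teo:admissible}, so the real content of the proof is unpacking the definition of ``admissible spanning tree'' and checking that it meets the hypothesis of that theorem.

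First I would unpack what the hypothesis buys us. An \emph{admissible spanning tree} $\tilde{\Gamma}$ of $\G$ is a subgraph of $\G$ that (i) is a tree, hence connected and acyclic, (ii) contains every vertex of $\G$, which means it has as vertex set exactly the set of geometric generators $\{\beta_1,\dots,\beta_n\}$, and (iii) is admissible, so that for each edge $(\beta_i,\beta_j)$ of $\tilde{\Gamma}$ one has $[\beta_i,\beta_j]\in[K',K']$. In particular the very existence of a spanning tree of $\G$ forces $\G$ to be connected, so this corollary lives inside the scope of Conjecture~1.

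Next I would observe that these three bullets are precisely the three defining conditions of an \emph{admissible graph} in the sense of the paragraph preceding Theorem~\ref{teo:admissible}: connectedness, full vertex set, and the commutator condition. Thus $\tilde{\Gamma}$ is itself an admissible graph (viewed in its own right rather than as a subgraph of $\G$). Applying Theorem~\ref{teo:admissible} to $\tilde{\Gamma}$ yields that $\A$ is a-monodromic, which is the conclusion.

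There is no genuine obstacle to overcome here: all of the technical work, namely the use of Theorem~\ref{amonoaff} to reduce a-monodromicity to checking $[\beta_i,\beta_j]\in[K',K']$ for \emph{all} pairs, together with the closed-product-of-commutators trick in Lemma~\ref{lem:ciclicprod} that propagates admissibility along paths of the graph, has already been carried out in the proof of Theorem~\ref{teo:admissible}. The corollary is simply the observation that when the required collection of admissible commutators happens to be realized by edges of $\G$ that form a spanning tree, the hypothesis of that theorem is automatically satisfied, giving a clean ``combinatorial'' sufficient condition for a-monodromicity phrased purely in terms of the graph of double points enhanced with the admissibility data on its edges.
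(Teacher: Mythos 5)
Your proposal is correct and matches the paper's (implicit) argument: the paper states this corollary with no proof beyond a \qed, treating it exactly as you do, namely as the observation that an admissible spanning tree of $\G$ is in particular an admissible graph in the sense required by Theorem \ref{teo:admissible}. Nothing further is needed.
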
  \qed

\ni Of course, under the hypotheses of corollary \ref{cor:admissible}, the graph $\G$ is connected.

Examples where $\G$ contains an admissible spanning tree are the conjugate-free arrangements in definition \ref{conjfree}. Here all commutators (corresponding to the edges of $T$) of the geometric generators  are simply equal to $1$ in the group $G'.$  Therefore theorem \ref{teo:admissible} is a generalization of theorem \ref{teo:conjfree}.

Very little effort is needed to show that the whole graph $\G$ of double points in the arrangement of fig.\ref{fig2} is admissible: therefore corollary \ref{cor:admissible} applies to this case.

For the sake of completeness, we also mention that, for all the examples in part 6 which have non trivial monodromy,  all the quotient groups $[G',G']/[K',K']$ are free abelian of rank $2.$ This fact is in accordance  with the monodromy computations given in part 6, since in all these cases one has $\varphi_3$-torsion. It also follows that, for such examples,  the first homology group of the Milnor fiber has no torsion.

\begin{rmk} When the graph $\G$ of double points is not connected, then we can consider its decomposition into connected components $\G= \sqcup_i\ \G_i.$ We have a corresponding decomposition $\A=\sqcup_i\ \A_i$ of the arrangement.  By definition, every double point of $\A$ is a double point  of exactly one of the $\A_i$'s, while each  pair of lines in different  $\A_i$'s  either intersect in some point of multiplicity greater than $2,$  or are parallel (we are considering the affine case here).  If our conjecture is true, then each $\A_i$ is a-monodromic. At the moment we are not able to speculate about how the monodromy of $\A$ is influenced by these data: apparently, the only knowledge of such decomposition gives little control on the multiplicities of the intersection points of different components, which can assume very different values.  We are going to address these interesting  problems in future work.
\end{rmk}

\section*{Acknowledgments} 
Partially supported by INdAM and by: Universit\`a di Pisa under the ``PRA  - Progetti di Ricerca di Ateneo'' (Institutional Research Grants) - Project no. PRA\_2016\_67 
``Geometria, Algebra e Combinatoria di Spazi di Moduli e Configurazioni''.

\vfill\eject
%
%

%
%
%

\end{document}